\documentclass[a4paper,11pt]{article}

\RequirePackage{amsmath,amsthm,amssymb}
\usepackage{xcolor}
\usepackage{tikz}
\usetikzlibrary{patterns}
\usepackage{a4wide}
\usepackage{url}
\usepackage{hyperref}

\newtheorem{thm}{Theorem}
\newtheorem{lem}[thm]{Lemma}
\newtheorem{prop}[thm]{Proposition}
\newtheorem{corol}[thm]{Corollary}

\theoremstyle{definition}
\newtheorem{defn}[thm]{Definition}
\newtheorem{claim}[thm]{Claim}
\newtheorem{question}{Question}

\theoremstyle{remark}

\newcommand{\I}{\mathcal{I}}
\renewcommand{\P}{\mathcal{P}}
\newcommand{\T}{\mathcal{T}}
\renewcommand{\min}{\mathbf{min}}
\renewcommand{\max}{\mathbf{max}}

\newcommand{\mdd}{\mathbf{mdd}}
\newcommand{\Sat}{\mathbf{Sat}}
\newcommand{\dist}{\mathbf{dist}}

\newcommand{\red}{\mathbf{red}}
\newcommand{\uncolour}{\mathbf{uncolour}}
\newcommand{\tc}{\textcolor}
\renewcommand{\b}[1]{\tc{blue}{\underline{#1}}}
\renewcommand{\r}[1]{\tc{red}{\overline{#1}}}
\newcommand{\ISBT}{\mathbf{ISBT}}
\newcommand{\REC}{\mathbf{REC}}
\newcommand{\asc}{\mathbf{asc}}

\newcommand{\DrawPoint}[3][]{\filldraw[#1] (#2-0.5,#3+0.5) circle(6pt);}
\newcommand{\commentaire}[1]{}

\title{On minimal pattern-containing inversion sequences}

\begin{document}
\setlength{\tabcolsep}{2pt}
\renewcommand{\arraystretch}{1.15}

\begin{center}
{\LARGE \textbf{On minimal pattern-containing \\ inversion sequences}} \\[20pt]
{\large Benjamin Testart} \\[10pt]
{\small Université de Lorraine, CNRS, Inria, LORIA, F-54000 Nancy, France} \\
\end{center}

\abstract{
We introduce the notion of \emph{minimal inversion sequences} for a pattern $\rho$, which form the smallest set of inversion sequences whose avoidance is equivalent to the avoidance of $\rho$ for inversion sequences. We give a characterization of $\rho$-minimal inversion sequences based on the occurrences of the pattern $\rho$ they contain, and use it to find upper and lower bounds on the lengths of $\rho$-minimal inversion sequences. We provide some enumerative results on the exact number of minimal inversion sequences for some patterns, through a bijection with increasing trees, and some exhaustive generation. Lastly, we enumerate inversion sequences which are equal to their reduction, and find an interesting connection with poly-Bernoulli numbers.
} \medbreak

{\noindent \textbf{Keywords: }inversion sequences, patterns, Cayley permutations, increasing trees, exponential generating functions, poly-Bernoulli numbers of type C}

\section{Introduction}
\subsection{Basic definitions}
For all $n \geqslant 0$, let $\I_n$ be the set of \emph{inversion sequences} \cite{Corteel_Martinez_Savage_Weselcouch_2016, Mansour_Shattuck_2015} of length $n$, that is, the set of integer sequences $\sigma = (\sigma_1, \dots, \sigma_n)$ such that $\sigma_i \in \{0, \dots, i-1\}$ for all $i \in \{1, \dots, n\}$. Let $\I = \sqcup_{n \geqslant 0} \I_n$ be the set of all inversion sequences.

For all $n \geqslant 0$, let $\P_n$ be the set of \emph{Cayley permutations} \cite{Mor_Fraenkel_1984} (with values starting at 0) of length $n$, that is, the set of integer sequences $\rho = (\rho_1, \dots, \rho_n)$ whose set of values is exactly $\{0, \dots, \max(\rho)\}$. Let $\P = \sqcup_{n \geqslant 0} \P_n$ be the set of all Cayley permutations. In this work, a \emph{permutation} is a Cayley permutation which does not have any repeated values.

Given two integer sequences $\sigma$ of length $n$, and $\rho$ of length $k$ such that $1 \leq k \leq n$, we say that $\sigma$ \emph{contains} the \emph{pattern} $\rho$ if there exists a subsequence of $k$ entries of $\sigma$ whose values appear in the same order as the values of $\rho$. Such a subsequence is called an \emph{occurrence} of $\rho$. We say that $\sigma$ \emph{avoids} the pattern $\rho$ if $\sigma$ does not contain $\rho$. For instance, the sequence $497416$ contains two occurrences $494$ and $474$ of the pattern $010$, but avoids the pattern $012$. Given a set of integer sequences $S$, we denote by $\I(S)$ the set of inversion sequences which avoid every pattern in $S$.

\subsection{Motivation and outline}

The study of patterns in inversion sequences began in \cite{Mansour_Shattuck_2015} and \cite{Corteel_Martinez_Savage_Weselcouch_2016}, by analogy with permutation patterns. We begin with a short introduction to some classical concepts of the permutation patterns literature, which may be found in \cite{Bevan_2015}. The set of patterns which may appear in a permutation is (up to reduction\footnote{The reduction of a sequence is defined in Section \ref{sec_defs}.}) itself the set of permutations, yielding a nice correspondence between permutations and their patterns. More precisely, the pattern containment relation is a partial order on the set of all permutations. A \emph{permutation class} is a down-set in this partial order: if $\sigma$ and $\tau$ are two permutations such that $\sigma$ contains $\tau$ as a pattern, then any permutation class which contains $\sigma$ also contains $\tau$. Every permutation class can be characterized by the minimal set of permutations that it avoids; this set is called the \emph{basis} of the class. The basis of a permutation class is an antichain (a set of pairwise incomparable elements) under the pattern containment order, and may be infinite.

In the context of inversion sequences, we can observe patterns that are not inversion sequences (even after reduction). For instance, the inversion sequence $0021401$ contains the pattern 120. It is not difficult to see that the patterns which may appear in inversion sequences form (up to reduction) the set of nonempty Cayley permutations, and this set uniquely represents every way in which the values of an integer sequence may be ordered.
We may therefore use the term ``pattern" instead of ``nonempty Cayley permutation" from here on, as it is usual in the literature.

As we observed, Cayley permutations are not always inversion sequences, so one might wonder whether the set $\I(\rho)$ of inversion sequences avoiding a pattern $\rho$ can be described as the set of inversion sequences $\I(S_\rho)$ avoiding some set of \emph{inversion sequences} $S_\rho \subseteq \I$. If we allow infinite sets, the obvious answer is yes: a possible choice for $S_\rho$ is the set of all inversion sequences which contain the pattern $\rho$. A more interesting refinement of this question is to ask whether there is a \emph{finite} set (or equivalently, a finite antichain) $S_\rho$ satisfying the above, and how small can such a set be?

This question already surfaced in \cite[page 3]{Kotsireas_Mansour_Yildirim_2024}, although it is incorrectly answered. We recall their definition and approach, before introducing our own in the next section.
\begin{defn}[From \cite{Kotsireas_Mansour_Yildirim_2024}]
    Given a pattern $\tau = \tau_1 \dots \tau_k$, let $L_\tau$ be the set of all inversion sequences of the form $\theta^{(1)} \tau_1 \theta^{(2)} \tau_2 \dots \theta^{(k)} \tau_k$ such that the length of the inversion sequence $\theta^{(1)} \tau_1 \theta^{(2)} \tau_2 \dots \theta^{(j)} \tau_j$ is minimal for each $j \in \{1, \dots, k\}$. Note that some words $\theta^{(j)}$ might be empty.
\end{defn}
\begin{claim}[From \cite{Kotsireas_Mansour_Yildirim_2024}] \label{cl1}
    For any set of patterns $B$, an inversion sequence $\sigma$ avoids $B$ if and only if $\sigma$ avoids every pattern in $L_B :=\cup_{\{\tau \in B\}}L_\tau$.
\end{claim}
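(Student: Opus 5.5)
The plan is to show that Claim~\ref{cl1} is false as stated: one direction holds, and the other fails on a very small example. Since the paper announces that this question was answered incorrectly, I will not try to prove the claim but to pin down exactly which implication breaks.

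First, the direction that does hold. By construction every sequence in $L_\tau$ is an inversion sequence containing $\tau$ as a subsequence: the entries $\tau_1,\dots,\tau_k$ appear in order, so they form an occurrence of $\tau$. Pattern containment is transitive. Hence if $\sigma$ contains some $\lambda\in L_\tau$, it contains $\tau$. By contraposition, an inversion sequence avoiding $B$ avoids every pattern of $L_B$.

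The converse is where the claim should fail, and I will look for the smallest possible $B$. I take $B=\{10\}$ and compute $L_{10}$ directly from the definition.
\begin{itemize}
\item The entry $\tau_1=1$ can only sit at a position $i\ge 2$, because an inversion sequence satisfies $\sigma_i\le i-1$. So $\theta^{(1)}$ has minimal length $1$, and its only possible entry is $0$.
\item The entry $\tau_2=0$ can follow immediately, so $\theta^{(2)}$ is empty.
\end{itemize}
Thus $L_{10}=\{010\}$.

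The claim would then say that an inversion sequence avoids $10$ if and only if it avoids $010$. I expect the counterexample to be $\sigma=0021$.
\begin{itemize}
\item $\sigma$ is an inversion sequence, since $0\le 0$, $0\le 1$, $2\le 2$ and $1\le 3$.
\item $\sigma$ contains $10$, via the subsequence $21$.
\item $\sigma$ avoids $010$. An occurrence of $010$ needs a value $x$, then a larger value, then $x$ again. The only repeated value in $\sigma$ is $0$, and its two copies are adjacent with nothing larger between them, so no such occurrence exists.
\end{itemize}
This contradicts the claim for $B=\{10\}$.

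The main obstacle is interpreting the definition of $L_\tau$ correctly. It is not fully explicit whether the words $\theta^{(j)}$ range over all fillings that keep the sequence an inversion sequence. I will check that under any reasonable reading the only sequence produced for $\tau=10$ is $010$, since the single forced entry of $\theta^{(1)}$ can only be $0$. With that settled, the counterexample is robust. Conceptually, the construction fixes $\theta^{(1)}$ to be as short as possible, which forces a small value that may be repeated later. An actual occurrence of $\tau$ in $\sigma$, however, may be preceded by an arbitrary prefix, such as $00$ in $0021$, that embeds no member of $L_\tau$. This observation motivates replacing $L_\tau$ by the genuinely minimal inversion sequences containing $\rho$, as the paper does next.
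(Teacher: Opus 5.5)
Your proposal is correct and follows the paper's own refutation: the paper also takes $B=\{10\}$, observes $L_B=\{010\}$, and exhibits $0021\in\I_4$, which contains $10$ but avoids $010$, so the claim is false. Your check of the valid direction (avoiding $B$ implies avoiding $L_B$) is correct, though the refutation does not need it.
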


Our first contribution is the simple remark that Claim \ref{cl1} is false. Indeed, taking $B = \{10\}$, we have $L_B = \{010\}$, and Claim \ref{cl1} then states that an inversion sequence avoids the pattern 10 if and only if it avoids 010. Clearly this is not true: the sequence $0021 \in \I_4$ contains the pattern 10, yet avoids 010.

In \cite{Kotsireas_Mansour_Yildirim_2024}, Claim \ref{cl1} is then used to argue that inversion sequences containing a pattern $\tau \in \P_k$ for $k \geq 1$ must always contain a pattern $\tau' \in \I \cap \P$ of length at most $2k-1$ (since sequences in $L_\tau$ have length at most $2k-1$).
We give in Corollary \ref{corol_length} the correct upper bound of $3k - 2$, and we show in Section \ref{sec_optimality} that there are patterns $\tau$ of any length $k \geq 1$ for which this bound is reached.
This change of bound does not invalidate the algorithmic approach of \cite{Kotsireas_Mansour_Yildirim_2024}, since its purpose is to \emph{guess} succession rules, not to prove them; the enumerating formulas given in \cite{Kotsireas_Mansour_Yildirim_2024} are unaffected, as they do not rely on the validity of the $2k-1$ bound.

A summary of our work is as follows.
\begin{itemize}
    \item In Section \ref{sec_defs}, we introduce some definitions, most notably the notion of minimal inversion sequences for a pattern, and a useful statistic we call $\mdd$.
    \item In Section \ref{sec_structure}, we state properties on the general structure of minimal inversion sequences. In particular, we provide a characterization of $\rho$-minimal inversion sequences based on conditions satisfied by the occurrences of the pattern $\rho$ they contain. We also find lower and upper bounds on the lengths of $\rho$-minimal inversion sequences, which depend on the length and $\mdd$ of $\rho$.
    \item In Section \ref{sec_optimality}, we show that the aforementioned bounds are almost optimal.
    \item In Section \ref{sec_enumeration}, we enumerate $\rho$-minimal inversion sequences for any pattern $\rho$ such that $\rho_1 = \mdd(\rho)$ through a bijection with some increasing trees, and for any pattern $\rho$ of length at most 5 using computer programs.
    \item In Section \ref{sec_I_inter_P}, we enumerate the class $\I \cap \P$ (once again using a bijection with some increasing trees), and find new combinatorial interpretations of the poly-Bernoulli numbers of type C.
    \item In Section \ref{sec_conclusion}, we present open problems and  make some closing remarks.
\end{itemize}

\section{Definitions, notation, and simple observations} \label{sec_defs}
Given two integer sequences $\sigma$ and $\tau$, we denote $\tau \preceq \sigma$ if $\sigma$ contains $\tau$ as a pattern. Note that $\preceq$ is a partial order on the set $\P$, although it is only a preorder on the set of all integer sequences (it is also a preorder on the set of inversion sequences). In particular,
\begin{itemize}
    \item if $\tau \preceq \sigma$, then $|\tau| \leq |\sigma|$,
    \item if $\tau \preceq \sigma$ and $\sigma \npreceq \tau$, then $|\tau| < |\sigma|$.
\end{itemize}
We denote by $\sigma \cdot \tau$ the \emph{concatenation} of $\sigma$ and $\tau$, and by $\sigma^k$ the concatenation of $k$ copies of $\sigma$.
Given two integers $a$ and $b$, we denote by $[a,b]$ the \emph{integer interval} $\{k \in \mathbb Z \; : \; a \leq k \leq b\}$. We denote by $\varepsilon$ the empty sequence.

Let $\sigma$ be a nonnegative integer sequence of length $n \geq 1$. We define some functions on $\sigma$. Figure \ref{Fig_mdd_reduction} gives a visual representation of the functions $\mdd$ and $\red$ defined below.
\begin{itemize}
    \item Let $\dist(\sigma) := |\{\sigma_i\}_{i \in [1,n]}|$ be the number of \emph{distinct values} of $\sigma$.
    \item Let $\mdd(\sigma) := \max(\sigma_i - i + 1)_{i \in [1,n]}$ be the \emph{maximum diagonal difference} of $\sigma$, measuring how far $\sigma$ goes above the ``diagonal" sequence ${012\dots(n-1)}$. In particular, we have
    \begin{itemize}
        \item $\mdd(\sigma) \geq 0$ (since $\sigma_1 \geq 0$),
        \item $\mdd(\sigma) = 0$ if and only if $\sigma$ is an inversion sequence,
        \item $\mdd(\sigma \cdot \tau) = \max(\mdd(\sigma), \mdd(\tau) - n)$ for any nonempty sequence of nonnegative integers $\tau$.
    \end{itemize}
    \item Let $\red(\sigma)$ be the \emph{reduction} of $\sigma$, that is, the sequence obtained by replacing the minimum of $\sigma$ by 0, the second smallest value of $\sigma$ by 1, and so forth. Let ${\tau := \red(\sigma)}$. The following holds:
    \begin{itemize}
        \item $\tau_i \leq \sigma_i$ for all $i \in [1,n]$,
        \item $\mdd(\tau) \leq \mdd(\sigma)$,
        \item if $\sigma$ is an inversion sequence, then $\tau$ is an inversion sequence,
        \item $\dist(\tau) = \dist(\sigma)$,
        \item $\tau$ is the only Cayley permutation of length $n$ such that $\tau \preceq \sigma$,
        \item $\tau$ contains exactly the same patterns as $\sigma$.
    \end{itemize}
    Note that a pattern $\rho \in \P \backslash \{\varepsilon\}$ is contained in $\sigma$ if and only if $\rho$ is the reduction of a nonempty subsequence of $\sigma$.
    \item Let $\Sat(\sigma) := \{i \in [1,n] \; : \; \sigma_i - i + 1 = \mdd(\sigma)\}$ be the set of positions of \emph{saturated} entries of $\sigma$, that is, entries achieving the maximum diagonal difference. In particular,
    \begin{itemize}
        \item every saturated entry of $\sigma$ is a strict left-to-right maximum (i.e. it has a strictly greater value than every entry to its left),
        \item the maximum of $\sigma$ is not always saturated (e.g. $\sigma = 001$),
        \item if $\sigma \in \I_n$, $i \in [1,n]$ and $\sigma' := (\sigma_j)_{j \in [1,n] \backslash \{i\}}$ is the sequence obtained by deleting the entry $\sigma_i$ from $\sigma$, then $\sigma'$ is an inversion sequence if and only if $\Sat(\sigma) \subseteq [1,i]$ (since deleting $\sigma_i$ increases the diagonal difference of every entry to its right by 1).
    \end{itemize}
\end{itemize}

\begin{figure}[ht]
\centering
\begin{tikzpicture}
\begin{scope}[scale = 0.6]
    \draw[line width = 2pt, gray] (0,0) -- (5,5);
    \draw (0,0) grid (5,7);
    \draw[line width = 2pt, dotted, gray] (0.5,0.5) -- (0.5,2.5);
    \draw[line width = 2pt, dotted, gray] (1.5,1.5) -- (1.5,0.5);
    \draw[line width = 2pt, dotted, gray] (3.5,3.5) -- (3.5,6.5);
    \draw[line width = 2pt, dotted, gray] (4.5,4.5) -- (4.5,0.5);
    \DrawPoint{1}{2};
    \DrawPoint{2}{0};
    \DrawPoint{3}{2};
    \DrawPoint{4}{6};
    \DrawPoint{5}{0};
    \draw (2.5,-0.6) node[below, font=\small] {Diagonal difference};
    \draw (0.5,0) node[below] {2};
    \draw (1.5,0) node[below] {-1};
    \draw (2.5,0) node[below] {0};
    \draw (3.5,0) node[below] {3};
    \draw (4.5,0) node[below] {-4};

    \draw [->, line width = 1.5pt] (5.3, 2.5) -- (8.7, 2.5);
    \draw (7,2.5) node[above, font=\small] {Reduction};

\begin{scope}[xshift = 9cm]
    \draw[line width = 2pt, gray] (0,0) -- (5,5);
    \draw (0,0) grid (5,5);
    \draw[line width = 2pt, dotted, gray] (0.5,0.5) -- (0.5,1.5);
    \draw[line width = 2pt, dotted, gray] (1.5,1.5) -- (1.5,0.5);
    \draw[line width = 2pt, dotted, gray] (2.5,2.5) -- (2.5,1.5);
    \draw[line width = 2pt, dotted, gray] (3.5,3.5) -- (3.5,2.5);
    \draw[line width = 2pt, dotted, gray] (4.5,4.5) -- (4.5,0.5);
    \DrawPoint{1}{1};
    \DrawPoint{2}{0};
    \DrawPoint{3}{1};
    \DrawPoint{4}{2};
    \DrawPoint{5}{0};
    \draw (2.5,-0.6) node[below, font=\small] {Diagonal difference};
    \draw (0.5,0) node[below] {1};
    \draw (1.5,0) node[below] {-1};
    \draw (2.5,0) node[below] {-1};
    \draw (3.5,0) node[below] {-1};
    \draw (4.5,0) node[below] {-4};
\end{scope}
\end{scope}
\end{tikzpicture}
\caption{On the left, the sequence 20260 of maximum diagonal difference 3. On the right, its reduction 10120 of maximum diagonal difference 1.}
\label{Fig_mdd_reduction}
\end{figure}

\begin{defn}
Let $\rho \in \P \backslash \{\varepsilon\}$ be a pattern, and let $\I\P[\rho] := \{\sigma \in \I \cap \P \; : \; \rho \preceq \sigma\}$ be the set of inversion sequences which are also Cayley permutations and contain the pattern $\rho$. We say that $\sigma$ is a \emph{$\rho$-minimal inversion sequence} if $\sigma$ is a minimal element of the partially ordered set $(\I\P[\rho], \preceq)$.
\end{defn}

Let $\sigma \in \I$ be an inversion sequence, and let $\rho \in \P \backslash \{\varepsilon\}$ be a pattern. Then
\begin{itemize}
    \item $\rho \preceq \sigma$ if and only if there exists a $\rho$-minimal inversion sequence $\tau$ such that $\tau \preceq \sigma$, since $\red(\sigma) \preceq \sigma$, and $\rho \preceq \sigma$ implies that $\red(\sigma) \in \I\P[\rho]$,
    \item $\sigma$ is $\rho$-minimal if and only if $\rho \preceq \sigma$, $\sigma \in \P$, and there is no inversion sequence $\tau$ such that $|\tau| < |\sigma|$ and $\rho \preceq \tau \preceq \sigma$.
\end{itemize}
By the first bullet point above, $\sigma$ avoids $\rho$ if and only if $\sigma$ avoids every $\rho$-minimal inversion sequence. It is worth noting that $\rho$-minimal inversion sequences form the smallest set of inversion sequences satisfying this property for any $\sigma \in \I$. More precisely, any set $S \subseteq \I \cap \P \backslash \{\varepsilon\}$ such that $\I(S) = \I(\rho)$ must contain every $\rho$-minimal inversion sequence.

A given pattern $\rho$ may admit several $\rho$-minimal inversion sequences, and their lengths can be different. For instance, the $10$-minimal inversion sequences are $\{010, 0021\}$, and the $0201$-minimal inversion sequences are $\{00201, 01201, 001312, 010312, 011302\}$, as we will see. We show in Corollary \ref{corol_length} that there is only a finite number of $\rho$-minimal inversion sequences for any pattern $\rho$, by giving an upper bound on their length. In general, it is no easy task to find every $\rho$-minimal inversion sequence, although we provide such results for some patterns in Section \ref{sec_enumeration}.

\section{Structure of minimal inversion sequences} \label{sec_structure}

Given a pattern $\rho$, a $\rho$-minimal inversion sequence $\sigma$, and an occurrence of $\rho$ in $\sigma$, it can be seen that every entry of $\sigma$ serves one of two purposes: either it is part of the occurrence of $\rho$, or it ``fills up" space in order to allow the placement of larger values to its right (since the value of every entry of an inversion sequence must be less than its position). For instance, in the 10-minimal inversion sequence 0021, the subsequence 21 forms an occurrence of the pattern 10, while the entries 00 are used as ``filler".
For a non-trivial example, consider the $1306524$-minimal inversion sequence $01214027635$. This sequence contains three occurrences of the pattern $1306524$. For each occurrence, we colour the remaining filler entries in gray: $\tc{gray}01\tc{gray}2\tc{gray}140\tc{gray}27635$, $\tc{gray}0\tc{gray}12\tc{gray}140\tc{gray}27635$, $\tc{gray}0\tc{gray}1\tc{gray}2140\tc{gray}27635$.

Proposition \ref{prop_minimal_characterization} below shows that the $\rho$-minimality of an inversion sequence can be expressed by conditions on its filler entries for each occurrence of $\rho$.

\begin{prop} \label{prop_minimal_characterization}
    Let $n,k \geq 1$, let $\rho \in \P_k$, and let $\sigma \in \I_n \cap \P_n$ be a sequence containing the pattern $\rho$. Then $\sigma$ is $\rho$-minimal if and only if for any set $R \subseteq [1,n]$ such that $\rho' := (\sigma_{i})_{i \in R}$ is an occurrence of $\rho$, both conditions below are satisfied:
    \begin{enumerate}
        \item $[\max(\Sat(\sigma)),n] \subseteq R$,
        \item any value of $\sigma$ which does not appear in $\rho'$ appears at least twice in $\sigma$.
    \end{enumerate}
\end{prop}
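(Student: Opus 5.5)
The plan is to reformulate $\rho$-minimality in terms of deletions, and then treat the two directions separately. By the second bullet point after the definition of $\rho$-minimality, $\sigma$ fails to be $\rho$-minimal exactly when there is an inversion sequence $\tau$ with $|\tau|<|\sigma|$ and $\rho\preceq\tau\preceq\sigma$. Replacing $\tau$ by $\red(\tau)$, we may assume $\tau=\red(s)$ for a proper subsequence $s=(\sigma_j)_{j\in T}$, $T\subsetneq[1,n]$. Here $s$ contains an occurrence $(\sigma_j)_{j\in R}$ of $\rho$ with $R\subseteq T$. Throughout I use two facts. First, $\sigma_1=0$, so $\mdd(\sigma)=0$ and $\Sat(\sigma)=\{j:\sigma_j=j-1\}$. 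Second, $\sigma\in\P$.

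\emph{Necessity (single deletions).} Fix an occurrence $R$. If condition 1 fails, let $i$ be the largest index in $[\max(\Sat(\sigma)),n]\setminus R$. Delete $\sigma_i$. By the last bullet on $\Sat$, the result is an inversion sequence, since $\Sat(\sigma)\subseteq[1,i]$. It still contains $\rho$, so $\sigma$ is not minimal.

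If condition 2 fails, some value $v$ occurs only once in $\sigma$, at position $i\notin R$. Delete $\sigma_i$ and reduce; this lowers every value $>v$ by one, because $\sigma\in\P$. Entries to the right of $i$ with value $>v$ keep their diagonal difference $\le 0$, since their value and their position both drop by one. Entries to the right with value $<v$ are not saturated: $\sigma_j=j-1\ge i>v$ is impossible because $v=\sigma_i\le i-1$. So their diagonal difference becomes at most $0$. Hence the reduction is an inversion sequence containing $\rho$, again contradicting minimality.

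\emph{Sufficiency.} Assume both conditions hold for every occurrence, and suppose that some $T\subsetneq[1,n]$, $D=[1,n]\setminus T\neq\emptyset$, gives $\red(s)\in\I$ with an occurrence $R\subseteq T$. Let $m=\max(\Sat(\sigma))$. By condition 1 we have $[m,n]\subseteq R\subseteq T$, so every deleted index lies before $m$. Then $\sigma_m=m-1$ sits at position $m-|D|$ in $s$. Its reduced value is $m-1-\ell$, where $\ell$ is the number of values below $\sigma_m$ that disappear from $s$. The inversion condition at this entry forces $\ell\ge|D|$. Since $\ell$ is at most the number of distinct values of deleted entries, we get $\ell=|D|$. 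So every deleted entry carries a value that occurs in $\sigma$ only at deleted positions, and distinct deleted entries carry distinct values. In particular, each deleted value occurs exactly once in $\sigma$. Because $D\cap R=\emptyset$, such a value does not appear in $\rho'=(\sigma_j)_{j\in R}$. This contradicts condition 2 for the occurrence $R$.

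The step I expect to need the most care is the counting identity at the saturated entry $\sigma_m$. One must check that the reduced value of $\sigma_m$ is exactly $\sigma_m$ minus the number of smaller values absent from $s$; this uses $\sigma\in\P$, so that the value set is an interval. One must also check that all deleted values are indeed below $\sigma_m$. This holds because each deleted entry $\sigma_i$ satisfies $i<m$ and $\sigma_i\le i-1<m-1=\sigma_m$. Once this is verified, the argument is a direct pigeonhole on lost values versus deleted positions.
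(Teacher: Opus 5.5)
Your proposal is correct and follows essentially the same route as the paper. For necessity you use single deletions (with a reduction in the second case); for sufficiency you track the last saturated entry $\sigma_{\max(\Sat(\sigma))}$ through the deletion and reduction, where the inversion condition forces at least as many lost values as deleted positions. The only difference is cosmetic: you derive $\ell=|D|$ and exhibit a deleted value occurring once, contradicting condition 2, whereas the paper applies condition 2 first to get $|J^c|\geq 2|V|>|V|$ and then contradicts the inversion condition directly.
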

\begin{proof}
    Let $s := \max(\Sat(\sigma))$. First, we prove by contrapositive that both conditions are necessary. Let $R \subseteq [1,n]$ be such that $\rho' := (\sigma_{i})_{i \in R}$ is an occurrence of $\rho$.
    \begin{enumerate} 
        \item Assume that $u \in [s,n]$ satisfies $u \notin R$. Then $(\sigma_i)_{i \in [1,n] \backslash \{u\}}$ is an inversion sequence which contains the pattern $\rho$, hence $\sigma$ is not $\rho$-minimal.
        \item Assume that $u \in [1,n] \backslash R$ is such that the value $\sigma_u$ appears only once in $\sigma$, and let $ \sigma' := \red((\sigma_i)_{i \in [1,n] \backslash \{u\}})$. Clearly, $\sigma'$ contains the pattern $\rho$. We next show that $\sigma'$ is an inversion sequence. For each $i \in [1,n] \backslash \{u\}$, the transformation from $\sigma$ to $\sigma'$ shifts the diagonal difference of each entry $\sigma_i$ as follows:
        \begin{itemize}
            \item if $i > u$, then the removal of $\sigma_u$ increases the diagonal difference of $\sigma_i$ by 1,
            \item if $\sigma_i > \sigma_u$, then the reduction decreases the diagonal difference of $\sigma_i$ by 1.
        \end{itemize}
        It follows that the only entries whose diagonal difference is actually increased are those $\sigma_i$ such that $i > u$ and $\sigma_i <  \sigma_u$. Since $\sigma_u < u$, such entries satisfy $\sigma_i < i-2$, so they remain below the diagonal in $\sigma'$. We conclude that $\sigma'$ is an inversion sequence, therefore $\sigma$ is not $\rho$-minimal.
    \end{enumerate}
    Now assume that both conditions are satisfied by every occurrence of $\rho$ in $\sigma$. To complete our proof, we show that there is no sequence $\sigma' \in \I \cap \P$ such that $\sigma' \neq \sigma$ and ${\rho \preceq \sigma' \preceq \sigma}$.
    Let $J \varsubsetneq [1,n]$ be such that $\sigma' := \red((\sigma_i)_{i \in J})$ contains the pattern $\rho$, and let ${J^c := [1,n] \backslash J}$. By condition 1, we have $J^c \subseteq [1,s-1]$, therefore $\sigma_i \leq s-2$ for all $i \in J^c$. Let $V := \{v \in \{\sigma_i\}_{i \in J^c} \; : \; v \notin \{\sigma_i\}_{i \in J} \}$ be the set of values of $\sigma$ which do not appear in $\sigma'$. By condition 2, we have $|J^c| \geq 2 \,|V|$. Since $J^c \neq \emptyset$, this implies that $|J^c| > |V|$.
    Recall that $\sigma_s = s-1$, and for all $i \in J^c$ we have both $i < s$ and $\sigma_i < \sigma_s$. By looking at the value to which $\sigma_s$ is mapped in $\sigma'$, we obtain
    $$\sigma'_{s-|J^c|} = s-1 - |V| > s-1-|J^c|,$$
    therefore $\sigma'$ is not an inversion sequence. We conclude that $\sigma$ is $\rho$-minimal.
\end{proof}

Next, we establish a correspondence between the saturated entries of a pattern $\rho$ and those of $\rho$-minimal inversion sequences. This will be useful to prove Lemma \ref{lem_len-dist-mdd}, which shows a relation between three statistics (namely the length, $\dist$, and $\mdd$) of a pattern $\rho$ and its minimal inversion sequences. This relation then yields some lower and upper bounds on the lengths of $\rho$-minimal inversion sequences, in Corollary \ref{corol_length}.

\begin{prop} \label{prop_sat_pattern}
    Let $n,k \geq 1$, let $\rho \in \P_k$, let $\sigma \in \I_n$, let ${\ell \in [0,k-1]}$ be such that $k-\ell \in \Sat(\rho)$, let ${r_1 < \dots < r_k \in [1,n]}$ be such that $\rho' := (\sigma_{r_i})_{i \in [1,k]}$ is an occurrence of $\rho$, and let $R := \{r_i\}_{i \in [1,k]}$. If $\sigma$ is $\rho$-minimal, the following holds:
    \begin{enumerate}
        \item for all $i \in [1,n]$, if $\sigma_i \geq \sigma_{r_{k-\ell}}$, then $i \in R$,
        \item for all $i \in [0,\ell]$, we have $r_{k-i} = n-i$,
        \item $n - \ell \in \Sat(\sigma)$.
    \end{enumerate}
\end{prop}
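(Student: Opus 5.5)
The plan is to use the minimality of $\sigma$ in the following way: build a subsequence of $\sigma$ that still contains $\rho$, show that its reduction is an inversion sequence, and conclude that it must be all of $\sigma$. Throughout, write $m := \mdd(\rho)$ and $v := \sigma_{r_{k-\ell}}$. Since $k-\ell \in \Sat(\rho)$, we have $\rho_{k-\ell} = k-\ell-1+m$, and every $t \in [1,k]$ satisfies $\rho_t \leq t-1+m$. Since $\sigma$ is $\rho$-minimal, $\sigma \in \I_n \cap \P_n$, so every value in $[0,\max(\sigma)]$ appears in $\sigma$.

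\emph{Item 1.} I would take $J := R \cup \{ j \in [1,n] : \sigma_j < v\}$, so that we delete exactly the entries of value $\geq v$ that are not in the occurrence, and set $\sigma' := \red((\sigma_j)_{j \in J})$. Clearly $\rho \preceq \sigma'$. The work is to check that $\sigma'$ is an inversion sequence, which I would do by comparing new positions with new values, as in the proof of Proposition \ref{prop_minimal_characterization}.

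For a kept entry with $\sigma_j < v$, none of the deleted values lie below it, so its value does not decrease. However, its position drops by the number of deleted large entries to its left. For this case I would try to show there is enough slack, using that the deleted entries all have value $\geq v$. Being left of position $j$ with value $\ge v$ forces, via the inversion-sequence condition and the Cayley property, enough smaller values to the left. I expect this bookkeeping to be the main obstacle. If it resists, I would first prove items 2 and 3 with the smaller deletion set $\{j > r_{k-\ell} : j \notin R\}$, and only then treat entries to the left of $r_{k-\ell}$.

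For the occurrence entries $\sigma_{r_t}$ with $t \geq k-\ell$, the new value is at most the number of distinct values below $v$ (exactly $v$) plus the number of distinct values of $\rho$ in $[\rho_{k-\ell}, \rho_t)$, which is at most $\rho_t - \rho_{k-\ell} \le t-(k-\ell)$. The new position is at least $r_{k-\ell}+(t-(k-\ell))$ minus the number of deletions to the left of $r_{k-\ell}$, and $v \le r_{k-\ell}-1$ then gives the inequality. Once $\sigma'$ is known to be an inversion sequence, minimality forces $J = [1,n]$, which is item 1.

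\emph{Items 2 and 3.} By item 1, every entry after $r_{k-\ell}$ that is not in $R$ has value $<v$. Its diagonal difference is therefore negative, strictly below that of $\sigma_{r_{k-\ell}}$, so it is not saturated. I would then show, by the same value/position count as above applied to $\sigma_{r_{k-\ell}},\dots,\sigma_{r_k}$, that $\sigma_{r_{k-\ell}}$ attains $\mdd(\sigma)=0$ after those entries are accounted for. This means $v = r_{k-\ell}-1$, since otherwise the previous construction has slack and one further entry could be removed. Consequently $\max(\Sat(\sigma)) \geq r_{k-\ell}$, and condition 1 of Proposition \ref{prop_minimal_characterization} gives $[r_{k-\ell},n] \subseteq R$. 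Since $R$ has exactly $\ell$ elements after $r_{k-\ell}$, this yields $r_{k-i} = n-i$ for $i \in [0,\ell]$, which is item 2. Item 3 then follows, because $r_{k-\ell} = n-\ell$ and $\sigma_{n-\ell} = v = n-\ell-1$.
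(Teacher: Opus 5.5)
There is a genuine gap. Your strategy for item 1 (delete the entries of value $\ge v$ outside $R$, reduce, and check that the result is still an inversion sequence) is the paper's strategy. However, the verification is left open, and the part you do write is not correct as stated.

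For $\sigma_{r_t}$ with $t \ge k-\ell$, you correctly bound the new position below by $r_{k-\ell}-d+(t-(k-\ell))$, where $d$ is the number of deleted entries to the left of $r_{k-\ell}$. You bound the new value above by $v+(t-(k-\ell))$. Comparing the two requires $v<r_{k-\ell}-d$, and $v\le r_{k-\ell}-1$ does not give this when $d\ge 1$.

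The missing observation is that every deleted position $u$ satisfies $u>\sigma_u\ge v$, so no position in $[1,v]$ is ever deleted. This also settles the ``bookkeeping'' you flag as the main obstacle. Every kept entry therefore either keeps its position or moves to a position at least $v+1$. Since values below $v$ are unchanged, this gives $\sigma'_i<i$ for all entries of value $<v$ at once. It also shows that $\sigma_{r_{k-\ell}}$, whose new value is $v$, lands at a position $\ge v+1$, which is exactly what the entries $\sigma_{r_t}$ with $t>k-\ell$ need. The paper deletes $U=\{i\notin R:\sigma_i\ge v \text{ or } i\ge r_{k-\ell}\}$ and uses $\min(U)>v$ in this way, which proves items 1 and 2 simultaneously.

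Items 2 and 3 have two further problems.

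First, the crucial claim $v=r_{k-\ell}-1$ is only asserted (``the previous construction has slack and one further entry could be removed''). You do not identify the entry or explain why deleting it keeps an inversion sequence, and this is the whole content of item 3. The paper's argument, after items 1 and 2, runs as follows. If $\sigma_{n-\ell}$ is not saturated, then by your value count no position in $[n-\ell,n]$ is saturated. So $\max(\Sat(\sigma))$, which lies in $R$ by condition 1 of Proposition~\ref{prop_minimal_characterization}, equals $r_i$ for some $i<k-\ell$. Since $k-\ell\in\Sat(\rho)$, we have $\sigma_{r_{k-\ell}}-\sigma_{r_i}\ge\rho_{k-\ell}-\rho_i\ge k-\ell-i$. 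Saturation of $r_i$ and non-saturation of $r_{k-\ell}$ give $r_{k-\ell}-r_i>\sigma_{r_{k-\ell}}-\sigma_{r_i}$. Hence some position strictly between $r_i$ and $r_{k-\ell}$ is outside $R$, contradicting condition 1.

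Second, your step ``$\max(\Sat(\sigma))\ge r_{k-\ell}$, so condition 1 gives $[r_{k-\ell},n]\subseteq R$'' runs the wrong way. Condition 1 only yields $[\max(\Sat(\sigma)),n]\subseteq R$, which is a subset of $[r_{k-\ell},n]$. Moreover, $\max(\Sat(\sigma))$ can exceed $r_{k-\ell}$, e.g.\ $\rho=\sigma=01$ with $\ell=1$. You would need a separate argument for the positions in $[r_{k-\ell},\max(\Sat(\sigma)))$. More simply, put them into the deletion set from the start, as the paper does.
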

\begin{proof}
    First, we prove items 1 and 2 by contrapositive. Note that item 2 is equivalent to $[r_{k-\ell}, n] \subseteq R$. Assume that $\sigma$ does not satisfy item 1 or item 2 (or both), i.e.
    $$U := \{i \in [1,n] \backslash R \; : \; \sigma_i \geq \sigma_{r_{k-\ell}} \; \text{ or } \; i \geq r_{k-\ell}\} \neq \emptyset.$$
    In particular, we have $\min(U) > \sigma_{r_{k-\ell}}$ since every $i \in U$ satisfies either ${i > \sigma_i \geq \sigma_{r_{k-\ell}}}$ or $i \geq r_{k-\ell} > \sigma_{r_{k-\ell}}$ because $\sigma$ is an inversion sequence.
    Let $\sigma' := \red(\{\sigma_i\}_{i \in [1,n] \backslash U})$, let $n' := |\sigma'|$, let ${r'_i := r_i - |\{j \in U \; : \; j < r_i\}|}$ for all ${i \in [1,k]}$, and let $R' := \{r'_i\}_{i \in [1,k]}$.
    By construction, $r'_{k-i} = n'-i$ for all $i \in [0,\ell]$, and $\rho'' := (\sigma'_i)_{i \in R'}$ is an occurrence of $\rho$. Note that every entry $\sigma'_i$ corresponds to some entry $\sigma_j$ such that $i \leq j$ and $\sigma'_i \leq \sigma_j$; to be precise, $j$ can be defined as the only element of $[1,n] \backslash U$ such that $i = j - |\{u \in U \; : \; u < j\}|$. Now we show that $\sigma'$ is an inversion sequence. 
    \begin{itemize}
        \item Let $i \in [1,n']$ be such that $\sigma'_i \leq \sigma_{r_{k-\ell}}$, and let $\sigma_j$ be the entry of $\sigma$ corresponding to $\sigma'_i$. Since $\min(U) > \sigma_{r_{k-\ell}}$, the entry $\sigma_j$ either stays at position $j$, or moves to a position at least $\sigma_{r_{k-\ell}}+1$ in $\sigma'$. Hence we have either $i = j > \sigma_j \geq \sigma'_i$, or $i \geq \sigma_{r_{k-\ell}}+1 > \sigma'_i$. In any case, $\sigma'_i < i$.
        \item  Let $i \in [0,n'-1]$ be such that $\sigma'_{n'-i} > \sigma_{r_{k-\ell}}$. By definition of $\sigma'$ and $R'$, we have ${n'-i \in R'}$. Because $k-\ell \in \Sat(\rho)$ and $\sigma'_{n'-i} > \sigma_{r_{k-\ell}} \geq \sigma'_{r'_{k-\ell}}$, we have ${n'-i > r'_{k-\ell} = n'-\ell}$, hence $n'-i = r'_{k-i}$.
        By definition of $U$, every entry of $\sigma'$ of value at least $\sigma'_{n'-\ell}$ is part of $\rho''$. Since $\sigma'$ is a Cayley permutation, this implies that ${\sigma'_{n'-i} - \sigma'_{n'-\ell} = \rho_{k-i} - \rho_{k-\ell}}$. Moreover, we have $\rho_{k - i} - \rho_{k-\ell} \leq \ell - i$ because ${k-\ell \in \Sat(\rho)}$.
        Since $\sigma'_{n'-\ell} = \sigma'_{r'_{k-\ell}} \leq \sigma_{r_{k-\ell}}$, we have $\sigma'_{n'-\ell} < n'-\ell$ by the previous bullet point. We conclude that $\sigma'_{n'-i} \leq \sigma'_{n'-\ell} + \ell - i < n'-i$.
    \end{itemize}
    We showed that $\sigma'_i < i$ for every $i \in [1,n']$, hence $\sigma'$ is an inversion sequence, and therefore $\sigma$ is not $\rho$-minimal. This concludes the proof of items 1 and 2.

    Now we prove item 3 by contrapositive.
    Assume that $\sigma$ satisfies items 1 and 2, that $\sigma \in \P$ (these conditions are safe to assume since they are necessary to be $\rho$-minimal), and that $\sigma$ does not satisfy item 3. Since $\sigma$ is an inversion sequence and $n-\ell \notin \Sat(\sigma)$, we must have $\sigma_{n-\ell} < n - \ell - 1$. By items 1 and 2, every entry of $\sigma$ of value at least $n-\ell$ is part of $\rho'$. Since $\sigma$ is a Cayley permutation, by the same reasoning used in the first part of the proof, for all $i \in [0, \ell-1]$ such that $\sigma_{n-i} > n-\ell$, we have $$\sigma_{n-i} = \sigma_{n-\ell} + \rho_{k-i} - \rho_{k-\ell} \leq \sigma_{n-\ell} +  \ell - i < n-i-1,$$
    hence the entries of $\sigma$ at positions in $[n-\ell, n]$ are not saturated.
    Now, assume that $i \in [1, k-\ell-1]$ is such that $r_i \in \Sat(\sigma)$.
    In particular, we have $\sigma_{r_i} - r_i > \sigma_{r_{k-\ell}} - r_{k-\ell}$ since $r_{k-\ell} = n - \ell$ by item 2. Because $k-\ell \in \Sat(\rho)$, we have $\rho_i - i \leq \rho_{k- \ell} - (k-\ell)$, and in particular $\rho_i < \rho_{k- \ell}$. Because $(\sigma_{r_i})_{i \in [1,k]}$ is an occurrence of $\rho$, we have $\sigma_{r_i} < \sigma_{r_{k-\ell}}$ and $\sigma_{r_{k-\ell}} - \sigma_{r_i} \geq \rho_{k-\ell} - \rho_i \geq k - \ell - i$. It follows that $r_{k-\ell} - r_i > k - \ell - i$, in other words there exists some $j \in [r_i, r_{k-\ell}]$ such that $j \notin R$. In conclusion, any saturated entry of $\sigma$ that is part of $\rho'$ must be to the left of some entry of $\sigma$ that is not part of $\rho'$. This contradicts the first item of Proposition \ref{prop_minimal_characterization}, hence $\sigma$ is not $\rho$-minimal.
\end{proof}

\begin{lem} \label{lem_len-dist-mdd}
    Let $\rho$ be a pattern, and let $\sigma$ be a $\rho$-minimal inversion sequence. Then $$|\sigma| - \dist(\sigma) = |\rho| - \dist(\rho) + \mdd(\rho).$$
\end{lem}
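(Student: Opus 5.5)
Fix an occurrence of $\rho$ in $\sigma$, given by positions $r_1<\dots<r_k$ with $R:=\{r_i\}$, and choose $\ell\in[0,k-1]$ so that $k-\ell=\max(\Sat(\rho))$; then $\rho_{k-\ell}-(k-\ell)+1=\mdd(\rho)$. Write $n:=|\sigma|$. Since $\sigma\in\P$ and $n-\ell\in\Sat(\sigma)$ by item 3 of Proposition \ref{prop_sat_pattern}, and $\mdd(\sigma)=0$, we get $\sigma_{n-\ell}=n-\ell-1$. So the value $n-\ell-1$ sits at position $n-\ell=r_{k-\ell}$ (item 2), and it is the image of $\rho_{k-\ell}$ in the occurrence. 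The plan is to count distinct values below this entry in two ways and then account for the filler entries with Proposition \ref{prop_minimal_characterization}.

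\textbf{Counting values.} Because $\sigma$ is a Cayley permutation, exactly $n-\ell-1$ distinct values of $\sigma$ are smaller than $\sigma_{r_{k-\ell}}$. These values split into two groups. The first group consists of values appearing in $\rho'$; there are exactly $\rho_{k-\ell}$ of them, because $\rho$ is a Cayley permutation and the occurrence preserves the order of values. The second group consists of values not appearing in $\rho'$; let $V$ be this set. By item 1 of Proposition \ref{prop_sat_pattern}, every entry of value at least $\sigma_{r_{k-\ell}}$ lies in $R$, so every value not in $\rho'$ lies below $\sigma_{r_{k-\ell}}$. Hence
\[
\dist(\sigma)=\dist(\rho)+|V|, \qquad n-\ell-1=\rho_{k-\ell}+|V| .
\]

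\textbf{Counting filler entries.} The complement $[1,n]\backslash R$ has $n-k$ entries. I would argue that this count equals the number of values that are new relative to $\rho'$ plus the excess coming from repeated values. More precisely, rewrite the target identity $n-\dist(\sigma)=k-\dist(\rho)+\mdd(\rho)$, using $\mdd(\rho)=\rho_{k-\ell}-k+\ell+1=(n-\ell-1-|V|)-k+\ell+1=n-k-|V|$. After substituting $\dist(\sigma)=\dist(\rho)+|V|$, the target reads $n-\dist(\rho)-|V|=k-\dist(\rho)+n-k-|V|$, which holds identically.

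So the whole proof reduces to the two counting identities in the display. The only genuinely delicate step is the claim that all non-$\rho'$ values lie strictly below $\sigma_{r_{k-\ell}}$, together with the saturation equality $\sigma_{n-\ell}=n-\ell-1$; both come directly from Proposition \ref{prop_sat_pattern}. Condition 2 of Proposition \ref{prop_minimal_characterization} is not actually needed, which I would double-check. I would also verify the edge case where $\rho_{k-\ell}$ is counted correctly when values of $\rho$ repeat: the number of distinct $\rho$-values below $\rho_{k-\ell}$ is $\rho_{k-\ell}$, precisely because $\rho\in\P$.
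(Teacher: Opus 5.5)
Your proof is correct and is essentially the paper's argument: items 2 and 3 of Proposition~\ref{prop_sat_pattern} give $\sigma_{n-\ell}=n-\ell-1$, item 1 places every value outside $\rho'$ below that entry, the Cayley property of $\sigma$ and $\rho$ then yields $n-\ell-1=\rho_{k-\ell}+\dist(\sigma)-\dist(\rho)$, and you conclude by substituting $\rho_{k-\ell}=\mdd(\rho)+k-\ell-1$. You are right that condition 2 of Proposition~\ref{prop_minimal_characterization} is not needed here (the paper uses it only later, for the upper bound in Corollary~\ref{corol_length}), and your ``counting filler entries'' paragraph is really just that final algebraic substitution rather than a separate count.
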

\begin{proof}
    Let $n := |\sigma|$, let $k := |\rho|$, let $\ell \in [0,k-1]$ be such that $k-\ell \in \Sat(\rho)$, let $R \subseteq [1,n]$ be such that $\rho' := (\sigma_{i})_{i \in R}$ is an occurrence of $\rho$, and let $\tau := (\sigma_i)_{i \in [1,n] \backslash R}$ be the sequence obtained by removing $\rho'$ from $\sigma$.

    By Proposition \ref{prop_sat_pattern}, we know that $\sigma_{n-\ell}$ is $\rho'_{k-\ell}$, and every entry of $\tau$ has value less than $\sigma_{n-\ell}$. Since $\sigma$ is a Cayley permutation, this implies that ${\sigma_{n-\ell} = \rho_{k-\ell} + \dist(\sigma) - \dist(\rho)}$, where the number $\dist(\sigma) - \dist(\rho)$ counts the values of $\tau$ which do not appear in $\rho'$. We conclude by noticing that
    \begin{itemize}
        \item $\sigma_{n-\ell} = n - \ell - 1$, by item 3 of Proposition \ref{prop_sat_pattern}, and
        \item $\rho_{k-\ell} = \mdd(\rho) + k-\ell - 1$, by definition of $\ell$,
    \end{itemize}
    hence $n = \mdd(\rho) + k + \dist(\sigma) - \dist(\rho)$.
\end{proof}

\begin{corol} \label{corol_length}
    Let $\rho$ be a pattern, and let $\sigma$ be a $\rho$-minimal inversion sequence. Then $|\rho| + \mdd(\rho) \leq |\sigma| \leq |\rho| + 2 \, \mdd(\rho)$. In particular, $|\sigma| \leq 3\,|\rho| - 2$.
\end{corol}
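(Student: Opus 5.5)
The plan is to start from Lemma \ref{lem_len-dist-mdd}, which gives $|\sigma| = \dist(\sigma) + |\rho| - \dist(\rho) + \mdd(\rho)$. Both bounds then reduce to bounding the number $d := \dist(\sigma) - \dist(\rho)$ of values of $\sigma$ that do not occur in a fixed occurrence $\rho'$ of $\rho$. Since $\rho \preceq \sigma$, we have $d \geq 0$, which immediately gives the lower bound $|\sigma| \geq |\rho| + \mdd(\rho)$.

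For the upper bound it suffices to show $d \leq \mdd(\rho)$. Fix $\ell$ with $k-\ell \in \Sat(\rho)$ and an occurrence $\rho'$ at positions $R$. By Proposition \ref{prop_sat_pattern}, every entry outside $R$ has value strictly less than $\sigma_{n-\ell}$. Since $\sigma$ is a Cayley permutation, the proof of Lemma \ref{lem_len-dist-mdd} gives $\sigma_{n-\ell} = \rho_{k-\ell} + d$. By item 2 of Proposition \ref{prop_minimal_characterization}, each of the $d$ missing values appears at least twice among the filler entries, so $n - k = |[1,n]\backslash R| \geq 2d$. Substituting $n = k + d + \mdd(\rho)$ from the Lemma gives $d + \mdd(\rho) \geq 2d$, that is, $d \leq \mdd(\rho)$. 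Hence $|\sigma| \leq |\rho| + 2\,\mdd(\rho)$.

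The main point to check is that condition 2 of Proposition \ref{prop_minimal_characterization} really forces at least two filler entries per missing value. It does: a value absent from $\rho'$ can only be carried by entries outside $R$, and condition 2 says it appears at least twice in $\sigma$, so all of those appearances are filler entries. This makes the counting $n-k \geq 2d$ valid.

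For the final inequality, note that $\mdd(\rho) = \max(\rho_i - i + 1)$. Since $\rho_i \leq \max(\rho) \leq k-1$ and $i \geq 1$, we get $\mdd(\rho) \leq k-1$. Therefore $|\sigma| \leq k + 2(k-1) = 3k-2$.
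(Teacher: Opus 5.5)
Your proof is correct and is essentially the paper's argument: the lower bound comes from $\dist(\sigma) \geq \dist(\rho)$ via Lemma~\ref{lem_len-dist-mdd}, the upper bound from combining $|\sigma|-|\rho| \geq 2(\dist(\sigma)-\dist(\rho))$ (item 2 of Proposition~\ref{prop_minimal_characterization}) with that lemma, and the final bound from $\mdd(\rho) \leq \max(\rho) \leq |\rho|-1$. Your detour through Proposition~\ref{prop_sat_pattern} and the identity $\sigma_{n-\ell} = \rho_{k-\ell}+d$ is harmless but never used, so you can drop it.
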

\begin{proof}
    The lower bound is satisfied because $|\dist(\sigma)| \geq |\dist(\rho)|$, since $\sigma$ contains the pattern $\rho$.

    To prove the upper bound, let $\rho'$ be an occurrence of $\rho$ in $\sigma$, and let $\tau$ be the sequence obtained by removing $\rho'$ from $\sigma$. Let $t := |\tau|$, $m := \mdd(\rho)$, and let $v$ be the number of values of $\tau$ which do not appear in $\rho'$. In particular, $t = |\sigma| - |\rho|$, and $v = \dist(\sigma) - \dist(\rho)$. By item 2 of Proposition \ref{prop_minimal_characterization}, any value of $\tau$ which is not in $\rho'$ must appear at least twice in $\tau$, therefore $t \geq 2v$. By Lemma \ref{lem_len-dist-mdd}, we have $t = v + m$, hence $2v \leq v + m$, hence $v \leq m$, hence $t \leq 2m$, and finally $|\sigma| = |\rho| + t \leq |\rho| + 2m$.

    To see that $|\sigma| \leq 3\,|\rho| - 2$, simply notice that $\mdd(\rho) \leq \max(\rho)$, and $\max(\rho) \leq |\rho|-1$ since $\rho$ is a Cayley permutation.
\end{proof}

For any pattern $\rho$, we remark that all inversion sequences of the form $\alpha \cdot \rho$ for some ${\alpha \in \I_{\mdd(\rho)}}$ are $\rho$-minimal. Indeed, these are $\rho$-containing inversion sequences of minimal length, and they are Cayley permutations since $\max(\alpha) < \mdd(\rho) \leq \max(\rho)$. In particular, the total number of $\rho$-minimal inversion sequences is at least $\mdd(\rho)!$.

We conclude this section with a very simple but important consequence of Corollary \ref{corol_length} on pattern avoidance in inversion sequences.

\begin{thm}
    Let $P \subset \P \backslash \{\varepsilon\}$ be a finite set of patterns. There exists a finite set $Q \subset \I \cap \P$ such that $\I(P) = \I(Q)$.
\end{thm}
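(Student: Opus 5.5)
The plan is to take $Q$ to be the union, over all $\rho \in P$, of the sets of $\rho$-minimal inversion sequences. Write $M_\rho := \{\sigma \in \I\P[\rho] \; : \; \sigma \text{ is } \rho\text{-minimal}\}$ and set $Q := \bigcup_{\rho \in P} M_\rho$. By definition, $Q \subseteq \I \cap \P$, so two things remain: that $Q$ is finite, and that $\I(P) = \I(Q)$.

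For finiteness, we invoke Corollary \ref{corol_length}. Every $\sigma \in M_\rho$ satisfies $|\sigma| \leq 3\,|\rho| - 2$. The set $\I_n$ is finite (it has $n!$ elements) for each $n$, so $M_\rho$ is contained in the finite set $\bigcup_{n \leq 3|\rho|-2} \I_n$. Since $P$ is finite, $Q$ is a finite union of finite sets and is therefore finite.

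For the equality of classes, we use the first observation following the definition of $\rho$-minimality. For any $\sigma \in \I$ and any pattern $\rho$, we have $\rho \preceq \sigma$ if and only if some $\tau \in M_\rho$ satisfies $\tau \preceq \sigma$. The forward direction holds because $\red(\sigma) \in \I\P[\rho]$, and a finite poset has a minimal element below any given element, which applies here since everything below $\red(\sigma)$ has bounded length. The backward direction holds by transitivity of $\preceq$, since $\rho \preceq \tau \preceq \sigma$. It follows that $\sigma$ avoids every $\rho \in P$ if and only if $\sigma$ avoids every element of every $M_\rho$, that is, $\I(P) = \I(Q)$.

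No step here is a genuine obstacle, since the substantive work is in Corollary \ref{corol_length}. The only point that needs care is the existence of a minimal element below $\red(\sigma)$ in $(\I\P[\rho], \preceq)$. This follows because a strict descent in $\preceq$ among Cayley permutations strictly decreases length, so no infinite strictly descending chain exists.
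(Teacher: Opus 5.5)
Your proposal is correct and matches the paper's proof: you take $Q$ to be the union over $\rho \in P$ of the $\rho$-minimal inversion sequences, you get $\I(Q)=\I(P)$ from the remark following the definition of $\rho$-minimality, and you get finiteness from the bound $|\sigma| \leq 3|\rho|-2$ of Corollary~\ref{corol_length}. Your justification that a minimal element of $(\I\P[\rho],\preceq)$ exists below $\red(\sigma)$, because only finitely many Cayley permutations lie below it, is a detail the paper leaves implicit.
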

\begin{proof}
    Let $Q$ be the set obtained by replacing each pattern $\rho \in P$ by the set of $\rho$-minimal inversion sequences. By definition of $\rho$-minimal inversion sequences, we have $Q \subset \I \cap \P$ and $\I(Q) = \I(P)$. By Corollary \ref{corol_length}, $Q$ contains only inversion sequences of length at most $\max(3 \, |\rho| - 2 \; : \; \rho \in P)$, therefore $Q$ is finite.
\end{proof}

\section{Optimality} \label{sec_optimality}
In this section, we examine the optimality of the bounds on the lengths of $\rho$-minimal inversion sequences obtained in Corollary \ref{corol_length}. First, we remark that the upper bound depending only on $|\rho|$ cannot be improved, since it can be reached for a pattern $\rho$ of any length $k \geq 1$. Indeed, if $\rho$ is a pattern of length $k$ starting with the value $k-1$, take for example $\rho = (k-1)(k-2) \dots 210$, then
$$001122\dots (k-3)(k-3)(k-2)(k-2) \cdot (\rho_i + k-1)_{i \in [1,k]}$$
is a $\rho$-minimal inversion sequence of length $3k-2$. See Proposition \ref{prop_tight_ub} below for a proof.

Next, we investigate the lower and upper bounds depending on both $|\rho|$ and $\mdd(\rho)$. Clearly, the lower bound $|\rho| + \mdd(\rho)$ is reached for any pattern $\rho$, since $0^{\mdd(\rho)} \cdot \rho$ is always an inversion sequence. Now we show that the upper bound $|\rho| + 2\,\mdd(\rho)$ is reached for most patterns.

\begin{prop} \label{prop_tight_ub}
    Let $k \geq 2$, let $\rho \in \P_k$, and let $m := \mdd(\rho)$. If $\rho_i = 0$ for some $i \geq 2$, then there exists a $\rho$-minimal inversion sequence of length $k + 2m$.
\end{prop}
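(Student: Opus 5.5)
We construct an explicit candidate and verify it with the characterization of Proposition \ref{prop_minimal_characterization}. If $m = 0$, then $\rho$ is itself an inversion sequence and a Cayley permutation, so $\rho$ is $\rho$-minimal of length $k$. Assume $m \geq 1$. Following the example before the statement, we take
$$\sigma := 0\,0\,1\,1\,\dots\,(m-1)(m-1) \cdot (\rho_i + m)_{i \in [1,k]},$$
a prefix $\alpha$ of length $2m$ in which each value of $[0,m-1]$ appears twice, followed by $\rho$ shifted up by $m$. Then $|\sigma| = k+2m$, and $\sigma \in \P$ because the values $[0,m-1]$ occur in the prefix and $[m, m+\max(\rho)]$ occur in the suffix.

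\textbf{Step 1: $\sigma$ is an inversion sequence.} The prefix entries satisfy $\sigma_{2j+1} = \sigma_{2j+2} = j$, which lies below the diagonal. A suffix entry at position $2m+i$ has value $\rho_i + m \leq (m + i - 1) + m = 2m+i-1$ by definition of $\mdd$. Equality holds exactly when $i \in \Sat(\rho)$, so $\Sat(\sigma) = \{2m+i : i \in \Sat(\rho)\}$. In particular, $s := \max(\Sat(\sigma)) = 2m + \max(\Sat(\rho))$.

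\textbf{Step 2: locate every occurrence of $\rho$ in $\sigma$.} This is the main obstacle, and it is where the hypothesis $\rho_i = 0$ for some $i \geq 2$ enters. Take an occurrence $R$ and let $j := \max(\Sat(\rho))$. The entry of the occurrence playing the role of $\rho_j$ has value at least $\rho_j$ above the occurrence's minimum. The occurrence must also contain at least one entry playing $\rho_1$ (which is positive, since $\rho_1 \leq$ anything saturated, or else $\rho_1$ itself is not minimal) before the entry playing a later $0$.

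I would try an order-and-counting argument. Because $\alpha$ is weakly increasing with each value repeated only twice, and because $\rho$ has a descent to its minimum at a position $i \geq 2$, the occurrence cannot start inside $\alpha$ in a way that shifts the saturated part. More precisely, I would show the following. Any occurrence that uses entries of $\alpha$ only uses them for a weakly increasing initial stretch of $\rho$ lying strictly above the later $0$. But every later entry of $\sigma$ is at least $m$, which exceeds every value of $\alpha$. So an entry of $\alpha$ can never be followed by a strictly smaller entry, and it can play no role in $\rho$ that is followed by a smaller value. Since $\rho_1 > \rho_i = 0$ with $i \geq 2$, the entry playing $\rho_1$ is followed by a smaller one and hence lies in the suffix. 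Consequently the whole occurrence lies in the suffix (entries after position $2m$). The suffix has length exactly $k$, so $R = [2m+1, 2m+k]$ is the unique occurrence.

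\textbf{Step 3: check the two conditions of Proposition \ref{prop_minimal_characterization}.} Condition 1 holds because $R = [2m+1, n] \supseteq [s,n]$. For condition 2, the values not appearing in $\rho'$ are exactly $[0,m-1]$, and each appears twice in $\alpha$. Hence $\sigma$ is $\rho$-minimal of length $k+2m$.

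The hypothesis is genuinely needed. Without it, $\rho_1 = 0$, and an entry $0$ of $\alpha$ could play $\rho_1$, producing occurrences that omit a suffix entry and violating condition 1. So I expect Step 2 to be the only delicate part, and the key observation there is that every $\alpha$ entry is smaller than every suffix entry.
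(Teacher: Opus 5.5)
Your argument only covers the case $\rho_1 > 0$, and that is a genuine gap. The hypothesis ``$\rho_i = 0$ for some $i \geq 2$'' does not exclude $\rho_1 = 0$; for instance $\rho = 0201$ satisfies it, with $m = 1$. So the step ``since $\rho_1 > \rho_i = 0$ with $i \geq 2$'' is unjustified, and the parenthetical reason you give for $\rho_1$ being positive does not hold up. Your closing remark (``without it, $\rho_1 = 0$'') shows the hypothesis was read as $\rho_1 \neq 0$. In fact, without the hypothesis $\rho_1$ is the \emph{only} zero of $\rho$, and with it $\rho_1$ may still be $0$.

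When $\rho_1 = 0$ and $\rho_z = 0$ for some $z \geq 3$, your $\sigma$ still works, but you need a different reason why the entry playing $\rho_1$ lies in the suffix. Suppose it were an entry of $\alpha$ with value $j$. Then the entry playing $\rho_z$ has the same value, so it also lies in $\alpha$, since suffix values exceed all values of $\alpha$. The two entries are then the adjacent copies of $j$ at positions $2j+1, 2j+2$, which leaves no room for the entry playing $\rho_2$.

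The remaining case, $\rho_1 = \rho_2 = 0$ and $\rho_i > 0$ for all $i \geq 3$, cannot be repaired this way, because your construction can fail there. Take $\rho = 001423$, so $k = 6$, $m = 1$, and $\sigma = 00112534$. Positions $1,2,3,6,7,8$ give the occurrence $001534$ of $\rho$. It misses the value $2$, which appears only once in $\sigma$, so condition 2 of Proposition~\ref{prop_minimal_characterization} fails. Indeed, deleting that entry and reducing gives $0011423 \in \I \cap \P$, which still contains $\rho$. The problem is that a pair $jj$ from $\alpha$ can play $\rho_1\rho_2$, and a later entry of $\alpha$ can then play $\rho_3$.

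The paper handles this case with a different sequence of the same length $k+2m$:
$\tau := 01 \dots (m-1)\,m\,m\,(m-1) \dots 10 \cdot (\rho_i + m)_{i \in [3,k]}$.
Its prefix avoids $001 = \red(\rho_1\rho_2\rho_3)$, which forces $\rho_3, \dots, \rho_k$ onto the last $k-2$ entries. Hence the occurrences of $\rho$ in $\tau$ are exactly $zz \cdot (\rho_i + m)_{i \in [3,k]}$ for $z \in [0,m]$, and each satisfies both conditions of Proposition~\ref{prop_minimal_characterization}.

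A minor slip elsewhere: $\Sat(\sigma)$ also contains position $1$, since $\mdd(\sigma) = 0$ and $\sigma_1 = 0$. This does not change $\max(\Sat(\sigma))$, so it is harmless.
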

\begin{proof}
    First assume that $\rho_1 \neq 0$ or $\rho_z = 0$ for some $z \geq 3$, and let $$\sigma := 0011 \dots (m-1)(m-1) \cdot (\rho_i + m)_{i \in [1,k]} \in \I_{2m + k} \cap \P.$$
    In that case, $(\sigma_i)_{i \in [2m+1, 2m+k]}$ is the only occurrence of $\rho$ in $\sigma$, and it satisfies both conditions of Proposition \ref{prop_minimal_characterization}, therefore $\sigma$ is $\rho$-minimal.

     Now assume that $\rho_1 = \rho_2 = 0$ and $\rho_i > 0$ for all $i \in [3,k]$. If $k = 2$, then $00$ is trivially the only $\rho$-minimal inversion sequence. Otherwise, let $$\tau := 01 \dots (m-1)mm(m-1) \dots 10 \cdot (\rho_i + m)_{i \in [3,k]} \in \I_{2m + k} \cap \P.$$
     Notice that $\red(\rho_1 \rho_2 \rho_3) = 001$, and the prefix $(\tau_i)_{i \in [1, 2m+2]}$ avoids the pattern 001, hence the last $k-2$ entries of any occurrence of $\rho$ in $\tau$ must be $(\tau_i)_{i \in [2m+3, 2m+k]}$. 
     It follows that $\tau$ contains exactly $m+1$ occurrences of $\rho$, and they are the sequences $zz \cdot (\rho_i + m)_{i \in [3,k]}$ for $z \in [0,m]$. All of these occurrences satisfy both conditions of Proposition \ref{prop_minimal_characterization}, hence $\tau$ is $\rho$-minimal.
\end{proof}

We observe that there exist patterns $\rho$ whose minimal inversion sequences are all of length strictly less than $|\rho| + 2\, \mdd(\rho)$. The smallest such pattern is $021$, and its minimal inversion sequences are $\{0021, 0121\}$.
Now we show that even when the upper bound $|\rho| + 2\,\mdd(\rho)$ is not reached, there are still some $\rho$-minimal inversion sequences whose length comes very close to this bound.

\begin{prop} \label{prop_min_max_length}
    For any pattern $\rho$, there exists a $\rho$-minimal inversion sequence of length at least $|\rho| + 2 \, \mdd(\rho) - 2$.
\end{prop}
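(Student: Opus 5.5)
The plan is to reduce to Proposition \ref{prop_tight_ub}, which already gives length $|\rho| + 2\,\mdd(\rho)$ whenever $\rho$ has a $0$ at some position $i \geq 2$. Write $k := |\rho|$ and $m := \mdd(\rho)$. If $m \leq 1$ the statement is trivial, since $0^{m} \cdot \rho$ is $\rho$-minimal of length $k+m \geq k+2m-2$. So the remaining case is $m \geq 1$ (really $m \geq 2$) and $\rho = 0 \cdot \hat\rho'$, where every entry of $\hat\rho' := \rho_2 \cdots \rho_k$ is at least $1$. In this case $0$ appears only as the first entry of $\rho$.

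First I would pass to the shorter pattern $\hat\rho := \red(\rho_2\cdots\rho_k) = (\rho_{i+1}-1)_{i\in[1,k-1]}$. A direct computation gives
$$\hat\rho_i - i + 1 = \rho_{i+1} - (i+1) + 1,$$
and since $\rho_1 - 1 + 1 = 0$, this shows $\mdd(\hat\rho) = m$ (using $m \geq 1$). If $\hat\rho$ has a $0$ outside its first position, Proposition \ref{prop_tight_ub} gives a $\hat\rho$-minimal $\hat\sigma$ of length $(k-1)+2m$, built explicitly as $0011\cdots(m-1)(m-1)\cdot(\hat\rho_i+m)$ or as the palindromic variant. Otherwise $\hat\rho$ again starts with a unique $0$, and I would iterate. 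The iteration should terminate, since a pattern with a single $0$ at the front, stripped repeatedly, eventually yields a pattern with a repeated minimum or with small $\mdd$.

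Next I would lift $\hat\sigma$ back to $\rho$. The first candidate is $\sigma := 0\cdot(\hat\sigma_i+1)_i$. Shifting positions and values by one preserves the inversion-sequence property, $\sigma \in \P$, and $\sigma$ clearly contains $\rho$. Its length is $k+2m$, or shorter if part of the filler has to be removed. To certify minimality I would use Proposition \ref{prop_minimal_characterization}, which requires checking every occurrence $\rho'$ of $\rho$ in $\sigma$. Condition 1 should transfer from $\hat\sigma$, because saturated entries and suffix positions simply shift by one. Condition 2 is delicate: the first entry of an occurrence of $\rho$ may now be any filler entry of value less than all entries of the $\hat\rho$-part. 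Such an occurrence leaves the new leading $0$ unused, and that $0$ appears only once, which would break condition 2.

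This is the step I expect to be the main obstacle. My fix would be to sacrifice at most two filler entries. Concretely, I would use the filler structure $0011\cdots$ of the construction, delete one copy of the lowest duplicated pair and the extra $0$, and merge so that every value outside a given occurrence still appears twice. This is exactly where the loss of $2$ in the bound comes from. The target length would be $k+2m-2$. I would check the inversion condition on the saturated entry using Lemma \ref{lem_len-dist-mdd}: the count $|\sigma|-\dist(\sigma) = k-\dist(\rho)+m$ fixes how many distinct filler values are allowed, namely $v = m-2$ new values each doubled, plus $m-(m-2)=2$ further filler entries reusing values of the occurrence. I would then verify both conditions of Proposition \ref{prop_minimal_characterization} for each occurrence in this explicit sequence, with a separate check of small cases such as $\rho = 021$.
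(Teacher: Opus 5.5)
There is a genuine gap. The step you flag as the main obstacle is where all the content of the proposition lies, and the proposal does not supply it.

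First, for the sequences $\hat\sigma$ produced by Proposition~\ref{prop_tight_ub}, $\hat\sigma$ already contains $\rho$: a filler entry placed before an occurrence of $\hat\rho$, and smaller than all its entries, can play $\rho_1$. So the lift $0\cdot(\hat\sigma_i+1)_i$ is never $\rho$-minimal, and the whole question is how to repair it.

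Your concrete repair fails as stated. Deleting the extra $0$ and one copy of the lowest pair from $0\cdot 1122\cdots mm\cdot(\hat\rho_i+m+1)_i$ and reducing gives $0\cdot 11\cdots(m-1)(m-1)\cdot(\hat\rho_i+m)_i$. Its $\hat\rho$-block starts at position $2m$, so the entry realizing $\mdd(\hat\rho)=m$ violates $\sigma_i\le i-1$, and this is not an inversion sequence.

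The count from Lemma~\ref{lem_len-dist-mdd} ($m-2$ doubled new values plus two filler entries reusing values of the occurrence) only describes the shape a $\rho$-minimal sequence of length $k+2m-2$ must have. It does not construct one. It also gives no control over the other occurrences of $\rho$, which condition 2 of Proposition~\ref{prop_minimal_characterization} forces you to check.

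The iteration is also unaccounted for. After $j$ strips you do not explain how the prefix $01\cdots(j-1)$ is restored at a total cost of only $2$. The one repair that is free, letting the doubled filler play that prefix, costs $j$ entries. For example, $\rho=0126345$ ($k=7$, $m=3$) needs three strips, and $001122\cdot 6345$ contains $\rho$ but has length $10<k+2m-2$.

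What is missing is a direct construction for the case where $\rho_1$ is the only zero, which is how the paper proceeds. If $\rho_2\ge 2$, the sequence you already had, $0011\cdots(m-1)(m-1)\cdot(\rho_i+m-1)_{i\in[2,k]}$, is itself $\rho$-minimal of length $k+2m-1$, since filler entries can only play $\rho_1$. Lifting it is the wrong move.

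If $\rho_2=1$, the paper does not pay filler for the prefix at all. Let $\ell$ be the length of the longest prefix of $\rho$ that is an inversion sequence. The paper takes $00\cdot\alpha\cdot\beta\cdot\gamma$, where:
\begin{itemize}
\item $\alpha=\rho_1\cdots\rho_{\ell-1}$;
\item $\beta=(\ell+1)(\ell+1)\cdots(\ell+m-2)(\ell+m-2)$;
\item $\gamma$ is $\rho_\ell\cdots\rho_k$ with every value above $\ell$ shifted up by $m-2$.
\end{itemize}
The substantive part of the proof is showing, via an entry $\rho_q=\ell$ with $q\ge\ell+2$, that entries of $\beta$ are never used by an occurrence of $\rho$. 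Hence every occurrence is a copy of $\alpha\cdot\gamma$ whose first entry is one of the three leading zeros, and Proposition~\ref{prop_minimal_characterization} then applies.

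A minor point: $0^m\cdot\rho$ already settles $m\le 2$, so the substantive case is $m\ge 3$.
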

\begin{proof}
    Let $k := |\rho|$, and let $m := \mdd(\rho)$. If $m \leq 2$, then $0^m \cdot \rho$ is a $\rho$-minimal inversion sequence of length $k + m \geq k + 2m - 2$. Assume now that $m \geq 3$. If $\rho_i = 0$ for some $i \geq 2$, then there exists a $\rho$-minimal inversion sequence of length $k + 2m$ by Proposition \ref{prop_tight_ub}. Assume now that $\rho_1$ is the only zero of $\rho$.

    Assume that $\rho_2 \geq 2$. Let $\rho' := (\rho_i + m - 1)_{i \in[2,k]}$, which is a sequence such that $\mdd(\rho') = 2m$, and let $\tau := 001122 \dots (m-1) (m-1) \cdot \rho' \in \I_{k+2m-1}$. Every term of value less than $m$ in $\tau$ can only take the role of the 0 in an occurrence of $\rho$, so all occurrences of $\rho$ in $\tau$ are subsequences of the form $v \cdot \rho'$ for some $v \in [0,m-1]$. These occurrences all satisfy the conditions of Proposition \ref{prop_minimal_characterization}, hence $\tau$ is $\rho$-minimal.

    Assume that $\rho_2 = 1$, and let $\ell := \max(i \in [1,k] \; : \; (\rho_j)_{j \in [1,i]} \in \I)$ be the length of the longest prefix of $\rho$ that is an inversion sequence. In particular, $\ell \geq 2$, $\rho_i < \ell$ for all $i \in [1,\ell]$, $\rho_{\ell+1} > \ell$, and there exists some $q \in [\ell+2,k]$ such that $\rho_q = \ell$.
    Let $\alpha := (\rho_i)_{i \in [1,\ell-1]}$, let $\beta := (\ell+1)(\ell+1)(\ell+2)(\ell+2) \dots (\ell+m-2)(\ell+m-2)$, and let $\gamma$ be the sequence of length $k-\ell+1$ defined by
    $$\forall i \in [\ell,k], \; \gamma_{i-\ell+1} := \begin{cases}
        \rho_{i} & \text{ if} \quad \rho_{i} \leq \ell, \\
        \rho_{i} + m-2 & \text{ if} \quad \rho_{i} > \ell.
    \end{cases}$$
    Let $\sigma := 00 \cdot \alpha \cdot \beta \cdot \gamma$, 
    which is a sequence of length $n := k+2m-2$. For example, if $\rho = 01165342$, then we have $m = 3$, $\ell = 3$, $\alpha = 01$, $\beta = 44$, and $\gamma = 176352$, hence $\sigma = 000144176352$.
    Going back to the general case, note that $\mdd(\alpha) = 0$, $\mdd(\beta) = \ell+1 = |00 \cdot\alpha|$, and $$\mdd(\gamma) = m+(\ell-1)+(m-2) = 2m+\ell-3 = |00 \cdot \alpha \cdot \beta|,$$ therefore $\sigma$ is an inversion sequence.
    It can be seen that $\sigma$ is also a Cayley permutation:
    \begin{itemize}
        \item every value in $[0, \ell]$ appears in $\rho$, and remains unchanged in $\alpha \cdot \gamma$,
        \item every value in $[\ell+1, \ell+m-2]$ appears in $\beta$,
        \item every value in $[\ell+m-1, \max(\rho)+m-2]$ appears in $\gamma$,
        \item by construction, $\max(\sigma) = \max(\gamma) = \max(\rho)+m-2$.
    \end{itemize}
    Now, we show that in an occurrence of $\rho$ in $\sigma$, the values of $\beta$ are too large to take the role of the term $\rho_i$ for any $i \leq \ell$. Let $r_1 < \dots < r_k \in [1,n]$ be such that $(\sigma_{r_i})_{i \in [1,k]}$ is an occurrence of $\rho$. Notice that $r_2 \geq 4$, since $\sigma_1 = \sigma_2 = \sigma_3 = 0$ and $\rho_1$ is the only zero of $\rho$. This implies that $r_i \geq i+2$ for all $i \in [2,k]$, in particular $r_{\ell} \geq \ell+2$. Let $b := |00 \cdot \alpha \cdot \beta| = 2m+\ell-3$, and let $q \in [\ell+2,k]$ be such that $\rho_q = \ell$.
    \begin{itemize}
        \item Assume for the sake of contradiction that $r_\ell \leq b$ and $r_{\ell+1} > b$. In particular, $\sigma_{r_{\ell}} > \ell$ since $r_\ell \in [\ell+2, b]$. Because $\rho_q > \rho_\ell$, this also implies that $\sigma_{r_q} > \ell$. Now, we have
        \begin{itemize}
            \item $\{i \; : \; \rho_i \geq \ell \} = \{i \; : \; \sigma_{r_i} \geq \sigma_{r_q}\}$ by definition of $(r_i)_{i \in [1,k]}$ and because $\rho_q = \ell$,
            \item $|\{i \; : \; \sigma_{r_i} \geq \sigma_{r_q}\}| \leq |\{i \; : \; \gamma_i \geq \sigma_{r_q}\}|$ since 
            \begin{itemize}
                \item $\sigma_{r_i} \geq \sigma_{r_q}$ is equivalent to $\rho_i \geq \ell$, so it implies that $i > \ell$ by definition of $\ell$,
                \item $r_{\ell+1} > b$ by hypothesis, so $(\sigma_{r_i})_{i \in [\ell+1,k]}$ is a subsequence of $\gamma$,
            \end{itemize}
            \item $|\{i \; : \; \gamma_i \geq \sigma_{r_q}\}| \leq |\{i \; : \; \gamma_i > \ell\}|$ since $\ell < \sigma_{r_q}$,
            \item $|\{i \; : \; \gamma_i > \ell\}| = |\{i \; : \; \rho_i > \ell\}|$ by definition of $\gamma$.
        \end{itemize}
        In summary, we have $|\{i \; : \; \rho_i \geq \ell \}| \leq |\{i \; : \; \rho_i > \ell\}|$; this implies that $\rho$ does not contain the value $\ell$, a contradiction.
        \item Assume for the sake of contradiction that $r_{\ell+1} \leq b$. Recall that $\sigma_{r_\ell} < \sigma_{r_q} < \sigma_{r_{\ell+1}}$, and $\ell + 2 \leq r_\ell < r_{\ell+1} < r_q$. Since $\ell+2 \leq r_\ell < r_{\ell+1} \leq b$ and $\beta$ is nondecreasing, we have $\ell+1 \leq \sigma_{r_\ell} \leq \sigma_{r_{\ell+1}} \leq \ell + m - 2$. By construction of $\sigma$, every term $\sigma_i$ for $i \in [1+r_{\ell+1}, n]$ falls in one of three cases:
        \begin{itemize}
            \item $\sigma_i \geq \sigma_{r_{\ell+1}}$ if $i \leq b$ since $\beta$ is nondecreasing,
            \item $\sigma_i \leq \ell < \sigma_{r_\ell}$ if $i > b$ and $\rho_{i+k-n} \leq \ell$,
            \item $\sigma_i > \ell+m-2 \geq \sigma_{r_{\ell+1}}$ if $i > b$ and $\rho_{i+k-n} > \ell$.
        \end{itemize}
        This contradicts the existence of $\sigma_{r_q}$.
    \end{itemize}
    We showed that $r_\ell > b$. Since $|[b+1,n]| = |[\ell,k]|$, we have ${r_i = i+b-\ell+1 = i+n-k}$ for all $i \in [\ell,k]$. In particular, $\sigma_{r_q} = \sigma_{q+b-\ell+1} = \gamma_{q-\ell+1} = \rho_q = \ell$, hence ${\sigma_{r_{\ell-1}} < \ell}$ since ${\rho_{\ell-1} < \rho_q}$. This implies that $r_{\ell-1} \leq \ell+1$, since $r_{\ell-1} < r_\ell = b+1$ and $\sigma_i > \ell$ for all ${i \in [\ell+2,b]}$. Lastly, recalling that $r_i \geq i+2$ for all $i \geq 2$, we have ${r_j = j+2}$ for all $j \in [2,\ell-1]$. In summary, there are at most three occurrences of $\rho$ in $\sigma$, as we completely determined the value of each $r_i$ except for $r_1 \in \{1,2,3\}$. It is easy to verify that those three subsequences are occurrences of $\rho$: indeed, all three are equal to the sequence $\alpha \cdot \gamma$, which is order-isomorphic to $\rho$ by definition. It is also straightforward to verify that all three occurrences of $\rho$ satisfy the conditions of Proposition \ref{prop_minimal_characterization}, since ${\max(\Sat(\sigma)) = b+\max(\Sat(\rho)) \geq b+2}$, and the entries of $\sigma$ which are not used in each occurrence of $\rho$ are exactly two zeros and the factor $\beta$. We conclude that $\sigma$ is $\rho$-minimal.
\end{proof}
We remark that Proposition \ref{prop_min_max_length} cannot be improved to a length of ${|\rho| + 2 \, \mdd(\rho) - 1}$. Indeed, the pattern $013542$ is of length 6 and $\mdd$ 2, and all 34 of its minimal inversion sequences are of length 8. The process we used to generate these minimal inversion sequences is explained in Section \ref{sec_exhaustive}, and Table \ref{table_ISBT} (on page \pageref{table_ISBT}) shows that every pattern $\rho$ of length 5 or less admits some $\rho$-minimal inversion sequence of length at least ${|\rho| + 2 \, \mdd(\rho) - 1}$.

\section{Enumeration of minimal inversion sequences} \label{sec_enumeration}

\subsection{Patterns beginning by a large value} \label{sec_colours}

In this section, we enumerate $\rho$-minimal inversion sequences for any pattern $\rho$ such that $\rho_1 = \mdd(\rho)$, i.e. $\rho_1 > \rho_i-i$ for all $i$. As we shall see, this assumption gives rise to a very simple structure for $\rho$-minimal inversion sequences (in particular, the number of such sequences depends only on the $\mdd$ of $\rho$). Moreover, this assumption is often satisfied by patterns of small size. To be precise, a computation shows that patterns $\rho$ such that $\rho_1 = \mdd(\rho)$ account for more than half of $\P_k$ for any $k \in [1, 11]$, and more than 10\% of $\P_k$ up to $k = 231$.

For all $n \geq m \geq 0$, let $\mathcal A_{n,m}$ be the set of coloured inversion sequences of length $n$ which satisfy the following conditions:
\begin{itemize}
    \item every \emph{value} in $[0,n-1]$ is assigned a colour, either blue or red\footnote{In what follows, we additionally underline blue values and overline red values so that colour is not required to tell them apart.}, and all occurrences of this value in the inversion sequence receive its colour,
    \item there are $m$ distinct blue values and $n - m$ distinct red values,
    \item each red value occurs at least twice in the sequence.
\end{itemize}
Note that any value of $[0,n-1]$ which does not occur in such a sequence is implicitly blue. For instance, $\b 0 \b 0 \r 1 \r 3 \b 2 \r 3 \r 1 \b 6 \r 1 \in \mathcal A_{9,7}$ and the corresponding colouring of values is $\{\b 0, \r 1, \b 2, \r 3, \b 4, \b 5, \b 6, \b 7, \b 8\}$, but $\r 0 \b 1 \b 2 \notin \mathcal A_{3,2}$ since the red $\r 0$ appears only once.
Note also that $\mathcal A_{n,m}$ is empty when $n > 2m$, since its sequences must have at least $2(n-m)$ red entries. For any sequence $\alpha \in \mathcal A_{n,m}$, let $\uncolour(\alpha) \in \I_n$ be the uncoloured underlying inversion sequence.

\begin{prop} \label{prop_coloured_prefix}
    Let $k \geq 1$, $\,m \geq 0$, and let $\rho \in \P_k$ be such that $\rho_1 = \mdd(\rho) = m$.
    For all $n \geq 0$, the number of $\rho$-minimal inversion sequences of length $n+k$ is $|\mathcal A_{n,m}|$.
\end{prop}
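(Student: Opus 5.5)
The plan is to build an explicit bijection between $\rho$-minimal inversion sequences of length $n+k$ and $\mathcal A_{n,m}$. A $\rho$-minimal sequence will be sent to its coloured prefix of length $n$, and a coloured prefix $\alpha$ will be sent to $\alpha$ followed by a suitably relabelled copy of $\rho$.

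\emph{Forward direction.} Let $\sigma$ be $\rho$-minimal of length $n+k$, and fix an occurrence $(\sigma_{r_i})_{i\in[1,k]}$ of $\rho$. Since $\rho_1=\mdd(\rho)$, we have $1\in\Sat(\rho)$, so Proposition~\ref{prop_sat_pattern} applies with $\ell=k-1$.
\begin{itemize}
  \item Item 2 gives $r_i=n+i$ for all $i$, so every occurrence of $\rho$ is the suffix of length $k$.
  \item Item 3 gives $\sigma_{n+1}=n$.
  \item Item 1 shows that every entry of the prefix $\alpha:=(\sigma_i)_{i\in[1,n]}$ has value less than $n$, so $\alpha\in\I_n$.
\end{itemize}
Because the suffix is order-isomorphic to $\rho$ and $\sigma\in\P$, the values $0,\dots,m-1$ of $\rho$ are sent to $m$ values $b_0<\dots<b_{m-1}$ in $[0,n-1]$. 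The values $\rho_i\geq m$ are sent to $n+\rho_i-m$, since everything $\geq n$ lies in the suffix and the values are consecutive. Colour $b_0,\dots,b_{m-1}$ blue and the other $n-m$ values of $[0,n-1]$ red. Each red value must appear in $\sigma$ because $\sigma$ is a Cayley permutation, hence it appears in $\alpha$. By condition 2 of Proposition~\ref{prop_minimal_characterization}, applied to the unique occurrence, each red value appears at least twice. So the coloured $\alpha$ lies in $\mathcal A_{n,m}$. This map is injective, because $\sigma$ is recovered from $\alpha$ together with the blue set.

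\emph{Backward direction.} Given a coloured $\alpha\in\mathcal A_{n,m}$ with blue values $b_0<\dots<b_{m-1}$, set $\sigma:=\alpha\cdot\rho''$, where $\rho''_i=b_{\rho_i}$ if $\rho_i<m$ and $\rho''_i=n+\rho_i-m$ otherwise.
\begin{itemize}
  \item $\sigma$ is an inversion sequence: $\rho_i-i+1\leq m$ gives $\rho''_i\leq n+i-1$ in the second case, and $b_j\leq n-1$ in the first.
  \item $\sigma$ is a Cayley permutation: blue values occur in $\rho''$ because $\rho$ takes every value in $[0,m-1]$, red values occur in $\alpha$, and the values in $[n,\max]$ come from $\rho$.
\end{itemize}

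\emph{Minimality.} The remaining step, which I expect to be the main obstacle, is to prove that this $\sigma$ is $\rho$-minimal. The difficulty is that $\sigma$ may contain other occurrences of $\rho$, for instance ones using prefix entries. I would argue directly, as in the second half of the proof of Proposition~\ref{prop_minimal_characterization}. Suppose $\sigma'=\red((\sigma_i)_{i\in J})\in\I$ contains $\rho$ with $J\subsetneq[1,n+k]$. Take an occurrence of $\rho$ in $\sigma'$; its first entry has diagonal difference $m$ relative to $\rho$, so in $\sigma'$ it must have at least $m+(\text{its position}-1)$ as value room. Let $s=n+1$ be the position of the saturated entry $\sigma_{n+1}=n$.
\begin{itemize}
  \item First I would show that $J$ must contain $[n+1,n+k]$. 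Deleting a suffix entry either destroys the only way to realise the large values $\geq m$ of $\rho$ above all prefix values, or, if a prefix entry plays a role of value $\geq m$, the values of $\rho$ above it cannot fit below the diagonal. I would make this precise by comparing values to $n$.
  \item Then only prefix entries are deleted. Letting $V$ be the set of values lost, redness gives $|J^c|\geq 2|V_{\text{red}}|$. Each lost blue value would destroy the suffix occurrence, so I would argue that the occurrence in $\sigma'$ is still the suffix and hence no blue value is lost.
  \item Then $\sigma_{n+1}$ is mapped to $n-|V|>n-|J^c|$ at position $n+1-|J^c|$, which contradicts $\sigma'\in\I$.
\end{itemize}
If showing that every occurrence is the suffix proves too fiddly, the fallback is to verify the two conditions of Proposition~\ref{prop_minimal_characterization} for every occurrence. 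Condition 1 is handled by noting that the last saturated entry of $\sigma$ lies in the suffix, and condition 2 by the redness requirement.
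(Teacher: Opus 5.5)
Your forward map and your construction of the inverse map are correct. Colouring exactly the complement of $\{b_0,\dots,b_{m-1}\}$ red even gives the count $n-m$ without Lemma~\ref{lem_len-dist-mdd}. Your final deletion step also works once $J\supseteq[n+1,n+k]$ is known.

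\textbf{The gap.} The claim $J\supseteq[n+1,n+k]$ is the entire difficulty of the proposition, and you do not prove it. If $J$ misses a suffix position, the occurrence of $\rho$ in $\sigma'$ lifts to an occurrence in $\sigma=\alpha\cdot\rho''$ other than the suffix. So you must show that no such occurrence exists. Your heuristic that values ``cannot fit below the diagonal'' cannot show this: $\sigma$ is already an inversion sequence, so every occurrence in it fits.

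Moreover, the requirement that red values occur at least twice is indispensable at exactly this step, and your sketch never uses it there. Take $\rho=10$, $n=2$, and the prefix $01$ with $0$ blue and $1$ red (occurring once). Then $\sigma=0120$ has the second occurrence $(\sigma_2,\sigma_4)$. With $J=\{1,2,4\}$ you get $\red(010)=010\in\I$, although $J$ misses a suffix position.

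Your fallback fails for the same reason:
\begin{itemize}
\item The last saturated entry lying in the suffix does not give $[\max(\Sat(\sigma)),n+k]\subseteq R$ for an arbitrary occurrence $R$.
\item Condition 2 can fail for an occurrence that misses a blue value occurring only once in $\sigma$.
\end{itemize}

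\textbf{The missing idea.} What is needed is a counting argument on the first entry of a hypothetical second occurrence; this is the core of the paper's proof.
\begin{itemize}
\item Suppose such an occurrence starts at a prefix position $\ell\le n$ with value $v=\sigma_\ell$. It needs at least $m$ distinct values smaller than $v$ to the right of position $\ell$.
\item Red values never occur in the suffix, so at most $n-\ell$ of the $n-m$ red values occur after position $\ell$. Hence the set $U$ of red values occurring only at positions $\le\ell$ satisfies $|U|\ge \ell-m$.
\item Each $u\in U$ with $u\ge v$ occurs at least twice (it is red) at positions in $[u+1,\ell]\subseteq[v+1,\ell]$ (since $\alpha$ is an inversion sequence). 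So there are at most $\lfloor(\ell-v)/2\rfloor\le \ell-v-1$ such values.
\item Therefore at least $(\ell-m)-(\ell-v-1)=v-m+1$ values of $[0,v-1]$ are absent to the right of $\ell$. At most $m-1$ smaller values remain, which is a contradiction.
\end{itemize}
So every occurrence of $\rho$ starts at position $n+1$ and is the suffix. Proposition~\ref{prop_minimal_characterization}, or equivalently your deletion argument, then completes the proof. This is where both defining constraints of $\mathcal A_{n,m}$ (exactly $n-m$ red values, each occurring at least twice) do their work.
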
 
\begin{proof}
    We present a simple bijection between $\rho$-minimal inversion sequences of length $n+k$ and $\mathcal A_{n,m}$, depicted in Figure \ref{Fig_coloured_prefix}.

     Let $\sigma$ be a $\rho$-minimal inversion sequence of length $n+k$. By Proposition \ref{prop_sat_pattern}, $\sigma$ has a unique occurrence $\rho' := (\sigma_{i})_{i \in [n+1,n+k]}$ of $\rho$. Let $\alpha$ be the coloured inversion sequence of length $n$ defined by 
     $$\forall i \in [1,n], \;\alpha_i := \begin{cases}
         \b{\sigma_i} & \text{if } \; \sigma_i \in \{\rho'_j\}_{j \in [1,k]}, \\
         \r{\sigma_i} & \text{otherwise}.
     \end{cases}$$
    Note that every red value of $\alpha$ appears at least twice, by item 2 of Proposition \ref{prop_minimal_characterization}. Moreover, the number of distinct red values in $\alpha$ is $\dist(\sigma) - \dist(\rho)$, which equals $n-m$ by Lemma \ref{lem_len-dist-mdd}.
    It follows that $\alpha \in \mathcal A_{n,m}$.
    
    To describe the inverse bijection, let $\alpha \in \mathcal A_{n,m}$, and let $\rho'$ be the sequence obtained from $\rho$ by mapping the $i$-th smallest value in $[0,\max(\rho)]$ to the $i$-th smallest blue value of $\alpha$, with the convention that any value greater than or equal to $n$ is blue (since it cannot appear in $\alpha$). More formally, $\rho'$ can be defined as the only sequence which is order-isomorphic to $\rho$ (i.e. such that $\rho \preceq \rho' \preceq \rho)$ and whose set of values is 
    $$[0,\max(\rho)+n-m] \, \backslash \, \{v \in [0,n-1] \; : \; \r v \in \{\alpha_i\}_{i \in [1,n]}\}.$$
    Now we show that $\sigma := \uncolour({\alpha}) \cdot \rho'$ is a $\rho$-minimal inversion sequence.
    \begin{itemize}
        \item For all $i \in [1,k]$, we have $\rho'_i \leq \rho_i + n-m$, hence $\mdd(\rho') \leq \mdd(\rho) + (n-m) = n$, therefore $\sigma$ is an inversion sequence. 
        \item There are exactly $m$ values of $\rho$ smaller than $\rho_1$, and $\alpha$ has exactly $m$ blue values in $[0,n-1]$, so it follows from the construction of $\rho'$ that $\rho'_1 = n$. In other words, $\sigma$ has a saturated entry at position $n+1$.
        \item Every value in $\{\sigma_{i}\}_{i \in [1,n]}$ which does not appear in $\rho'$ occurs at least twice in $\sigma$, since it corresponds to a red value of $\alpha$.
        \item Lastly, we show that $\sigma$ contains only one occurrence of the pattern $\rho$. Let $\ell \in [1,n+k]$ be the position of the leftmost term of an occurrence of $\rho$ in $\sigma$. In particular, we have $\ell \geq m+1$ since $\sigma_\ell \geq m$, and $\ell \leq n+1$ since there must be at least $k-1$ entries of $\sigma$ to its right. If $\ell = n+1$, then clearly the only occurrence of $\rho$ whose first term is $\sigma_\ell$ is $(\sigma_i)_{i \in [n+1,n+k]} = \rho'$.
        
        Assume for the sake of contradiction that $\ell \leq n$. Let $v := \sigma_{\ell}$, and let
        \begin{itemize}
            \item $V := \{\sigma_i \; : \; i > \ell \; \text{ and } \; \sigma_i < v \}$ be the set of values of $\sigma$ which are less than $v$ and appear to the right of position $\ell$,
            \item $U := \{i \in [0,n-1] \; : \; \r i \in \{\alpha_j\}_{j \leq \ell} \; \text{ and } \; \r i \notin \{\alpha_j\}_{j > \ell}\}$ be the set of values that are red in $\alpha$ and do not appear to the right of position $\ell$,
            \item $U^- :=\{i \in U \; : \; i < v\}$, 
            \item $U^+ := \{i \in U \; : \; i \geq v\}$.
        \end{itemize}
        Since $\sigma_\ell$ takes the role of $\rho_1 = m$ in an occurrence of $\rho$, we have $|V| \geq m$. Notice that $V$ must contain all values of $[0,v-1]$ that are blue in $\alpha$, since these values all appear in $\rho'$, and therefore to the right of position $\ell$ in $\sigma$. It follows that $V = [0,v-1] \backslash U^-$. 
        The total number of red values in $\alpha$ is $n-m$, and at most $n-\ell$ of them can fit in the interval of positions $[\ell+1,n]$, so we have $|U| \geq \ell - m$. Moreover, each value of $U^+$ must appear at least twice in the interval of positions $[v+1,\ell]$, so $|U^+| \leq (\ell - v)/2 \leq \ell - v - 1$. In conclusion, we obtain the following inequality:
        $$|V| = v - |U^-| = v - |U| + |U^+| \leq v - (\ell - m) + (\ell - v - 1) = m -1,$$
        a contradiction.
    \end{itemize}
    It follows from Proposition \ref{prop_minimal_characterization} that $\sigma$ is a $\rho$-minimal inversion sequence.
\end{proof} 
\begin{figure}[ht]
\centering
\begin{tikzpicture}
\begin{scope}[scale = 0.5]
    \draw[line width = 2pt, color = gray] (0,0)--(6,6);
    \draw (0,0) grid (6,6);
    \DrawPoint[red]{1}{0};
    \DrawPoint[red]{2}{0};
    \DrawPoint[blue]{3}{2};
    \DrawPoint[red]{4}{3};
    \DrawPoint[red]{5}{0};
    \DrawPoint[red]{6}{3};
    \draw (0,0.5) node[left, font=\scriptsize] {$\r 0$};
    \draw (0,1.5) node[left, font=\scriptsize] {$\b 1$};
    \draw (0,2.5) node[left, font=\scriptsize] {$\b 2$};
    \draw (0,3.5) node[left, font=\scriptsize] {$\r 3$};
    \draw (0,4.5) node[left, font=\scriptsize] {$\b 4$};
    \draw (0,5.5) node[left, font=\scriptsize] {$\b 5$};
    \draw (3,-0.2) node[below, font=\LARGE] {$\alpha$};

    \draw (7,3) node[font=\LARGE] {\textbf +};

    \begin{scope}[xshift = 8cm]
        \draw (0,0) grid (7,6);
        \DrawPoint{1}{4};
        \DrawPoint{2}{5};
        \DrawPoint{3}{4};
        \DrawPoint{4}{0};
        \DrawPoint{5}{3};
        \DrawPoint{6}{1};
        \DrawPoint{7}{2};
        \draw (3.5,-0.2) node[below, font=\LARGE] {$\rho$};
    \end{scope}
    \draw (16,3) node[font=\LARGE] {\textbf =};
    \begin{scope}[xshift = 17cm]
        \draw[line width = 2pt, color = gray] (0,0)--(8,8);
        \draw (0,0) grid (13,8);
        \DrawPoint{1}{0};
        \DrawPoint{2}{0};
        \DrawPoint{3}{2};
        \DrawPoint{4}{3};
        \DrawPoint{5}{0};
        \DrawPoint{6}{3};
        \draw[line width = 1pt] (6,0)--(6,8);
        \fill[pattern = crosshatch, pattern color = red] (6,0) rectangle (13,1);
        \fill[pattern = crosshatch, pattern color = red] (6,3) rectangle (13,4);

        \DrawPoint{7}{6};
        \DrawPoint{8}{7};
        \DrawPoint{9}{6};
        \DrawPoint{10}{1};
        \DrawPoint{11}{5};
        \DrawPoint{12}{2};
        \DrawPoint{13}{4};
        \draw (6.5,-0.2) node[below, font=\LARGE] {$\sigma$};
    \end{scope}
\end{scope}
\end{tikzpicture}
\caption{The coloured inversion sequence $\alpha = \r 0 \r 0 \b 2 \r 3 \r 0 \r 3$ and the pattern $\rho = 4540312$ together form the $\rho$-minimal inversion sequence ${\sigma = 0023036761524}$.}
\label{Fig_coloured_prefix}
\end{figure}

In order to count our coloured inversion sequences, we introduce another object.
For all $n \geq m \geq 1$, let $\mathcal B_{n,m}$ be the set of non-plane coloured labelled rooted trees on $n$ nodes satisfying the following:
\begin{enumerate}
    \item each node is labelled by a value in $[0,n-1]$ and coloured blue or red,
    \item each value in $[0,n-1]$ labels exactly one node,
    \item the tree is \emph{increasing}, i.e. each non-root node has a greater label than its parent,
    \item there are $m$ blue nodes and $n-m$ red nodes,
    \item each red node has at least two children.
\end{enumerate}

Let $\mathcal A := \bigcup_{n,m \geq 0} \mathcal A_{n,m}$ and $\mathcal B := \bigcup_{n,m \geq 1} \mathcal B_{n,m}$. We now establish a one-to-one correspondence between these two families.
Let $\phi \; : \; \mathcal A \to \mathcal B$ 
be the function which turns a coloured inversion sequence $\sigma$ into the tree $\tau$ defined as follows:
\begin{itemize}
    \item the root of $\tau$ has label $0$,
    \item the labels of the children of the node labelled $v$ in $\tau$ are the positions of the value $v$ in $\sigma$,
    \item the node labelled $v$ in $\tau$ has the same colour as the value $v$ in $\sigma$; the node labelled $|\sigma|$ is blue by convention.
\end{itemize}
Figure \ref{Fig_inv_tree_bijection} depicts the function $\phi$ on an example.

\begin{figure}[ht]
\centering
\begin{tikzpicture}
\begin{scope}[scale = 0.6]
    \draw[line width = 2pt, gray] (0,0) -- (8,8);
    \draw (0,0) grid (8,8);
    \DrawPoint[blue]{1}{0};
    \DrawPoint[red]{2}{1};
    \DrawPoint[red]{3}{1};
    \DrawPoint[red]{4}{3};
    \DrawPoint[red]{5}{1};
    \DrawPoint[blue]{6}{5};
    \DrawPoint[red]{7}{3};
    \DrawPoint[blue]{8}{5};
    \draw (0,0.5) node[left, font=\small] {$\b 0$};
    \draw (0,1.5) node[left, font=\small] {$\r 1$};
    \draw (0,2.5) node[left, font=\small] {$\b 2$};
    \draw (0,3.5) node[left, font=\small] {$\r 3$};
    \draw (0,4.5) node[left, font=\small] {$\b 4$};
    \draw (0,5.5) node[left, font=\small] {$\b 5$};
    \draw (0,6.5) node[left, font=\small] {$\b 6$};
    \draw (0,7.5) node[left, font=\small] {$\b 7$};
    \draw (0.5,0) node[below] {1};
    \draw (1.5,0) node[below] {2};
    \draw (2.5,0) node[below] {3};
    \draw (3.5,0) node[below] {4};
    \draw (4.5,0) node[below] {5};
    \draw (5.5,0) node[below] {6};
    \draw (6.5,0) node[below] {7};
    \draw (7.5,0) node[below] {8};
    
    \draw [->, line width = 1.5pt] (8.3, 3.5) -- (11.7, 3.5);
    \draw (10,3.5) node[above, font=\LARGE] {$\phi$};
\end{scope}

\begin{scope}[xshift = 95mm, yshift = 44mm, node font = \Large]
\tikzstyle{level 1}=[level distance=15mm]
\tikzstyle{level 2}=[sibling distance=22mm]
\tikzstyle{level 3}=[sibling distance=10mm]
\node{$\b 0$}
child {node{$\r 1$}
    child {node{$\b 2$}}
    child {node{$\r 3$}
        child {node{$\b 4$}}
        child {node{$\b 7$}}
    }
    child {node{$\b 5$}
        child {node{$\b 6$}}
        child {node{$\b 8$}}
    }
};
\end{scope}
\end{tikzpicture}
\caption{Illustration of the function $\phi$.}
\label{Fig_inv_tree_bijection}
\end{figure}

It is easy to check that for any $\sigma \in \mathcal A_{n,m}$, its image $\tau := \phi(\sigma)$ satisfies all 5 items in the definition of $\mathcal B_{n+1,m+1}$:
\begin{itemize}
    \item item 1 is satisfied since $\phi$ assigns a label and a colour to every node of $\tau$,
    \item item 2 is satisfied since the root of $\tau$ has label 0, and every integer in $[1,n]$ is the position of some value of $\sigma$,
    \item item 3 is satisfied since $\sigma$ is an inversion sequence: a value $v$ can only appear at position at least $v+1$ in $\sigma$,
    \item item 4 is satisfied since $\sigma$ has $m$ blue values and $n-m$ red values in $[0,n-1]$, and the node labelled $n$ is always blue,
    \item item 5 is satisfied since red values of $\sigma$ appear at least twice.
\end{itemize}
It is also easy to reverse $\phi$ and construct a coloured inversion sequence $\sigma \in \mathcal A$ from a tree $\tau \in \mathcal B$, since $\tau$ clearly determines the colour and positions of each value of $\sigma$. It follows that $\phi$ is a bijection between $\mathcal A_{n,m}$ and $\mathcal B_{n+1,m+1}$ for all $n,m \geq 0$.

For all $n < m$, let $\mathcal A_{n,m} := \emptyset$ and $\mathcal B_{n,m} := \emptyset$. Let $B(x,y) := \sum_{n,m \geq 1} \frac{|\mathcal B_{n,m}|}{n!} x^n y^m$ be the mixed generating function of $\mathcal B$ that is exponential in $x$ and ordinary in $y$.
\begin{thm}
    The formal power series $B(x,y)$ satisfies the equations
    $$\frac{\partial B}{\partial x} (x,y) = (y+1)e^{B(x,y)} - B(x,y) - 1, \quad B(0,y)=0.$$
\end{thm}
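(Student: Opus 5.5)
We derive the equation by the standard root-removal decomposition of increasing trees. Differentiating an exponential generating function in $x$ corresponds to removing the node with the smallest label, here the root labelled $0$, and relabelling the remaining nodes by $[0,n-2]$ while keeping their relative order. Thus $\frac{\partial B}{\partial x}$ is the exponential generating function of the structures left after deleting the root. These are sets of subtrees, each of which is again an increasing labelled tree satisfying items 1--5, so each is an element of $\mathcal B$. The labels are distributed among the subtrees in the usual way for labelled products, so unordered sets of subtrees are counted by the exponential. The variable $y$ still marks blue nodes inside the subtrees, and the root itself contributes one more node, whose colour we track by a separate factor.

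We split according to the colour of the root. If the root is blue, it contributes a factor $y$ and may have any number $j \geq 0$ of children. The remaining forest is an arbitrary set of trees of $\mathcal B$, giving $y\sum_{j\geq 0} B^j/j! = y\,e^{B}$. If the root is red, it contributes a factor $1$, and item 5 requires at least two children, giving $\sum_{j\geq 2} B^j/j! = e^{B} - 1 - B$. Adding the two cases yields
$$\frac{\partial B}{\partial x}(x,y) = y e^{B(x,y)} + e^{B(x,y)} - B(x,y) - 1 = (y+1)e^{B(x,y)} - B(x,y) - 1.$$
The initial condition $B(0,y)=0$ holds because $B$ has no term with $n=0$. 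Together with the differential equation it determines all coefficients recursively, so $B$ is the unique formal power series solution.

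The only point needing care is that the subtrees of the root really range over all of $\mathcal B$ independently. Each subtree, with its labels reduced to $[0,s-1]$ for $s$ nodes, is increasing, every red node keeps its children, and the colours are unconstrained apart from item 5. The total counts $n$ and $m$ are additive, and the condition $m\le n$ is automatic. A subtree may consist of a single blue node, which is allowed since $\mathcal B_{1,1}$ contains the one-node blue tree. A single red node is excluded by item 5, and this is consistent with $\mathcal B_{1,0}$ not being included, since $B$ sums over $m\geq1$. We also need the converse: any root colour together with an admissible set of subtrees yields a valid tree in $\mathcal B$. This holds because increasingness at the root is automatic, as the root carries the minimal label. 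So the decomposition is a bijection, and this verification is the main step. The rest is the routine computation above.
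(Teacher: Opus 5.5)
Your proof is correct and is essentially the paper's argument. The paper's specification $\mathcal B = \mu\,\mathcal X^\square \star \textsc{Set}(\mathcal B) + \mathcal X^\square \star \textsc{Set}_{\geq 2}(\mathcal B)$ is exactly your root-removal decomposition split by the colour of the root. The paper translates it into the integral form of the equation and then differentiates, whereas you read off the differential equation directly. One small strengthening: each subtree lies in $\mathcal B$ (that is, has $m \geq 1$) in general, not only in the one-node case you check, because every leaf has no children and so must be blue by item 5.
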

\begin{proof}
We use the symbolic method from \cite[Part A]{Flajolet_Sedgewick}. The class $\mathcal B$ is described by the recursive specification below, where $\mathcal X$ is an atomic class representing a single labelled node, $\mu$ is a neutral object marking the colour blue, and the symbol $^\square$ indicates the \emph{boxed product} \cite[page 139]{Flajolet_Sedgewick}. 
$$\mathcal B = \mu \, \mathcal X^\square \star \textsc{Set}(\mathcal B) + \mathcal X^\square \star \textsc{Set}_{\geq 2}(\mathcal B)$$
This specification translates to the following functional equation:
\begin{equation} \label{eq_B}
    B(x,y) = \int_0^x \left ( y \, e^{B(t,y)} +  e^{B(t,y)} - B(t,y) - 1 \right ) dt.
\end{equation}
The proof is easily concluded by taking the derivative of \eqref{eq_B} with respect to $x$, and by evaluating \eqref{eq_B} at $x=0$.
\end{proof}

Let $k \geq 1$, $\,m \geq 0$, and let $\rho \in \P_k$ be such that $\rho_1 = \mdd(\rho) = m$. By Proposition \ref{prop_coloured_prefix}, the number of $\rho$-minimal inversion sequences of length $n+k$ is $|\mathcal A_{n,m}|$, or equivalently $|\mathcal B_{n+1,m+1}| = (n+1)! [x^{n+1} y^{m+1}] B(x,y)$. These numbers are shown for small values of $n$ and $m$ in Table \ref{table_coloured_inv}, and we added them to \cite{OEIS} as entry A393523. The total number of $\rho$-minimal inversion sequences is $\sum_{n = m}^{2m} |\mathcal A_{n,m}|$, now entry A393524 of \cite{OEIS}.

\begin{table}
$$\begin{array}{|c|c|c|c|c|c|c|c|c|}
\hline
|\mathcal A_{n,m}| & m=0 & m=1 & m=2 & m=3 & m=4 & m=5 & m = 6 & m = 7 \\
\hline
n=0 & 1 & 0 & 0 & 0 & 0 & 0 & 0 & 0 \\
\hline
n=1 & 0 & 1 & 0 & 0 & 0 & 0 & 0 & 0\\
\hline
n=2 & 0 & 1 & 2 & 0 & 0 & 0 & 0 & 0 \\
\hline
n=3 & 0 & 0 & 5 & 6 & 0 & 0 & 0 & 0 \\
\hline
n=4 & 0 & 0 & 4 & 27 & 24 & 0 & 0 & 0 \\
\hline
n=5 & 0 & 0 & 0 & 49 & 168 & 120 & 0 & 0 \\
\hline
n=6 & 0 & 0 & 0 & 34 & 515 & 1200 & 720 & 0 \\
\hline
n=7 & 0 & 0 & 0 & 0 & 790 & 5471 & 9720 & 5040 \\
\hline
\end{array}$$
\caption{The numbers $|\mathcal A_{n,m}|$ for $n,m \in [0,7]$.}
\label{table_coloured_inv}
\end{table}

We end this subsection with some remarks on the minimal-length and maximal-length cases.
\begin{itemize}
    \item[--] The $\rho$-minimal inversion sequences of minimal length are in bijection with $\mathcal A_{m,m}$. Inversion sequences in $\mathcal A_{m,m}$ only have blue values, and they need not satisfy any additional constraints, so they are trivially in bijection with $\I_m$. Similarly, $\mathcal B_{m+1,m+1}$ is trivially in bijection with the set of (non-plane) increasing trees on $m+1$ nodes.
    Restricting the function $\phi$ to this particular case yields a simple bijection between inversion sequences and increasing trees. This correspondence appears in \cite[page 145]{Flajolet_Sedgewick}, it is used to enumerate inversion sequences avoiding the pattern 000 in \cite{Corteel_Martinez_Savage_Weselcouch_2016}, and generalized for a constant pattern of any length in \cite{Hong_Li_2022}.
    \item[--] The $\rho$-minimal inversion sequences of maximal length are in bijection with $\mathcal A_{2m,m}$. Inversion sequences in $\mathcal A_{2m,m}$ have $m$ red values and $m$ blue values, each red value appears exactly twice, and blue values do not appear at all.
    Similarly, $\mathcal B_{2m+1,m+1}$ is the set of (non-plane) increasing trees having $m$ red nodes and $m+1$ blue nodes, such that each red node has exactly two children, and all blue nodes are leaves. This family of trees was studied by Poupard \cite{Poupard_1989}, who found that their exponential generating function (counting trees with respect to their size $2m+1$) is $\sqrt 2 \tan(x/\sqrt 2)$. Their counting sequence appears in \cite{OEIS} as entry A002105.
\end{itemize}

\subsection{Exhaustive generation for small patterns} \label{sec_exhaustive}
In this subsection, we use computer programs to generate every $\rho$-minimal inversion sequence for a pattern $\rho$ of small length. We know from Corollary \ref{corol_length} that the length of every $\rho$-minimal inversion sequence is between $|\rho| + \mdd(\rho)$ and $|\rho| + 2 \,\mdd(\rho)$, so an easy approach would be to generate each sequence of $\I \cap \P$ whose length lies in this interval, and then test whether it is $\rho$-minimal using either the definition of $\rho$-minimal sequences (checking whether there is a smaller element in the poset $(\I\P[\rho], \preceq)$), or the characterization of Proposition \ref{prop_minimal_characterization} (checking whether every occurrence of $\rho$ satisfies some conditions). While this approach works for very small patterns, its limits are already noticeable for some patterns of length 5: their minimal inversion sequences can reach a length of 13, and there are 363\,674\,407 sequences in $\I_{13} \cap \P_{13}$ to test (see Section \ref{sec_I_inter_P} for the enumeration of $\I \cap \P$). 

Instead, we make use of some properties of minimal inversion sequences from Section \ref{sec_structure} to generate them more efficiently.
Assume that we are given a nonempty pattern $\rho$ with $m := \mdd(\rho)$ and $s := \min(\Sat(\rho))$, and that we wish to generate every $\rho$-minimal inversion sequence.
The general idea of our approach is to start from the pattern $\rho$, and build an inversion sequence $\sigma$ around it by inserting some new entries, possibly shifting some values of $\rho$ in the process.
This way, we are guaranteed to construct an inversion sequence which contains at least one occurrence of $\rho$ (we later refer to it as the \emph{starting occurrence} of $\rho$).
Additionally, Section \ref{sec_structure} gives several conditions that the newly inserted entries must satisfy for $\sigma$ to be $\rho$-minimal. In practice, we generate all such sequences $\sigma$ which belong to $\I \cap \P$ and satisfy the following conditions.
\begin{itemize}
    \item By items 1 and 2 of Proposition \ref{prop_sat_pattern}, each new entry inserted must be to the left and below the leftmost saturated entry of the starting occurrence of $\rho$ (that is $\rho_s$ at the beginning of our construction).
    \item By Lemma \ref{lem_len-dist-mdd}, if $n := |\sigma| - |\rho|$ is the number of new entries inserted in $\rho$, and $v := \dist(\sigma) - \dist(\rho)$ is the number of new values introduced by these entries, then $n-v = m$.
    \item By Corollary \ref{corol_length}, the number $n$ of new entries belongs to the interval $[m,2m]$.
    \item By item 2 of Proposition \ref{prop_minimal_characterization}, each of the $v$ new values is inserted at least twice.
\end{itemize}
If $\rho_1 = m$, then we obtain precisely the construction of Proposition \ref{prop_coloured_prefix} (the colour red marks new values, and the colour blue marks values which also appear in the starting occurrence of $\rho$).
This case is easy because, as we showed, inserting new entries does not create any other occurrences of $\rho$. In the general case, things are not so simple: while our construction guarantees that the starting occurrence of $\rho$ satisfies the conditions of Proposition \ref{prop_minimal_characterization}, we cannot guarantee this property for \emph{every} occurrence of $\rho$ in $\sigma$.
Consequently, we must test the $\rho$-minimality of each inversion sequence generated by this process. Nevertheless, this is much faster than the naive approach presented earlier, since the set of sequences to test is now significantly smaller. With this approach, any modern computer can generate all $\rho$-minimal inversion sequences for any pattern $\rho$ of length up to 7, taking at most a few minutes. This method also works well for patterns of greater length that have a small $\mdd$.

In order to better present some of this data, we introduce a new notation. Given a nonempty pattern $\rho$, we denote by $\ISBT(\rho)$ the \emph{inversion sequence basis type} of $\rho$, that is, the counting sequence of $\rho$-minimal inversion sequences of length ranging from $|\rho| + \mdd(\rho)$ to $|\rho| + 2 \, \mdd(\rho)$. For instance, if $\rho = 0312$, we have $|\rho| = 4$ and $\mdd(\rho) = 2$, so the length of every $\rho$-minimal inversion sequence is between 4 and 6; there are 6 of length 4, 5 of length 5, and none of length 6, so $\ISBT(0312) = (6,5,0)$. Note that $\ISBT(\rho) = (1)$ if and only if $\rho$ is an inversion sequence.

In Table \ref{table_ISBT}, we catalog the inversion sequence basis type of every pattern $\rho$ of length 1 through 5, using a computer program which generates every $\rho$-minimal inversion sequence. It is remarkable that there are only 27 distinct basis types for those 633 patterns.

\begin{table}
\centering
\begin{tabular}{|c|c|}
    \hline
    $\ISBT$ & Patterns \\
    \hline
    $(1)$ & Any $\rho$ such that $\rho_1 = \mdd(\rho) = 0$. \\
    \hline
    $(1, 1)$ & Any $\rho$ such that $\rho_1 = \mdd(\rho) = 1$. \\
    \hline
    $(2, 0)$ & \begin{minipage}{300pt} \strut \centering
    021, 0211, 0212, 0213, 0221, 0231, 02111, 02112, 02113, 02121, 02122, 02123, 02131, 02132, 02133, 02134, 02143, 02211, 02212, 02213, 02221, 02231, 02311, 02312, 02313, 02314, 02321, 02331, 02341
    \strut \end{minipage} \\
    \hline
    $(2, 3)$ & \begin{minipage}{300pt} \strut \centering
    0201, 0210, 02001, 02010, 02011, 02012, 02013, 02021, 02031, 02100, 02101, 02102, 02103, 02110, 02120, 02130, 02201, 02210, 02301, 02310
    \strut \end{minipage} \\
    \hline
    $(4, 0)$ & \begin{minipage}{300pt} \strut \centering
    0132, 01322, 01323, 01324, 01332, 01342
    \strut \end{minipage} \\
    \hline
    $(4, 1)$ & \begin{minipage}{300pt} \strut \centering
    01312, 01321
    \strut \end{minipage} \\
    \hline
    $(4, 3)$ & \begin{minipage}{300pt} \strut \centering
    01302, 01320
    \strut \end{minipage} \\
    \hline
    $(4, 4)$ & \begin{minipage}{300pt} \strut \centering
    00312, 00321
    \strut \end{minipage} \\
    \hline
    $(7, 0)$ & \begin{minipage}{300pt} \strut \centering
    01243
    \strut \end{minipage} \\
    \hline
    $(2, 5, 4)$ & Any $\rho$ such that $\rho_1 = \mdd(\rho) = 2$. \\
    \hline
    $(4, 8, 5)$ & \begin{minipage}{300pt} \strut \centering
    1302, 13024, 13042
    \strut \end{minipage} \\
    \hline
    $(4, 8, 6)$ & \begin{minipage}{300pt} \strut \centering
    1320, 13002, 13020, 13022, 13023, 13032, 13200, 13202, 13203, 13204, 13220, 13230, 13240, 13302, 13320, 13402, 13420
    \strut \end{minipage} \\
    \hline
    $(4, 11, 9)$ & \begin{minipage}{300pt} \strut \centering
    13012, 13021, 13102, 13120, 13201, 13210
    \strut \end{minipage} \\
    \hline
    $(6, 5, 0)$ & \begin{minipage}{300pt} \strut \centering
    0312, 0321, 03112, 03121, 03122, 03123, 03124, 03132, 03142, 03211, 03212, 03213, 03214, 03221, 03231, 03241, 03312, 03321, 03412, 03421
    \strut \end{minipage} \\
    \hline
    $(6, 21, 21)$ & \begin{minipage}{300pt} \strut \centering
    03012, 03021, 03102, 03120, 03201, 03210
    \strut \end{minipage} \\
    \hline
    $(7, 10, 3)$ & \begin{minipage}{300pt} \strut \centering
    10423, 10432
    \strut \end{minipage} \\
    \hline
    $(7, 13, 9)$ & \begin{minipage}{300pt} \strut \centering
    12403, 12430
    \strut \end{minipage} \\
    \hline
    $(12, 6, 0)$ & \begin{minipage}{300pt} \strut \centering
    02413, 02431
    \strut \end{minipage} \\
    \hline
    $(18, 4, 0)$ & \begin{minipage}{300pt} \strut \centering
    01423, 01432
    \strut \end{minipage} \\
    \hline
    $(6, 27, 49, 34)$ & Any $\rho$ such that $\rho_1 = \mdd(\rho) = 3$. \\
    \hline
    $(12, 50, 86, 57)$ & \begin{minipage}{300pt} \strut \centering
    24013
    \strut \end{minipage} \\
    \hline
    $(12, 50, 87, 59)$ & \begin{minipage}{300pt} \strut \centering
    24130
    \strut \end{minipage} \\
    \hline
    $(12, 50, 88, 60)$ & \begin{minipage}{300pt} \strut \centering
    24031, 24103, 24301, 24310
    \strut \end{minipage} \\
    \hline
    $(18, 61, 95, 58)$ & \begin{minipage}{300pt} \strut \centering
    14023, 14203
    \strut \end{minipage} \\
    \hline
    $(18, 61, 95, 60)$ & \begin{minipage}{300pt} \strut \centering
    14032, 14230, 14302, 14320
    \strut \end{minipage} \\
    \hline
    $(24, 54, 38, 0)$ & \begin{minipage}{300pt} \strut \centering
    04123, 04132, 04213, 04231, 04312, 04321
    \strut \end{minipage} \\
    \hline
    $(24, 168, 515, 790, 496)$ & Any $\rho$ such that $\rho_1 = \mdd(\rho) = 4$. \\
    \hline
\end{tabular}
\caption{The inversion sequence basis type of every pattern of length at most 5.}
\label{table_ISBT}
\end{table}

\section{Enumeration of \texorpdfstring{$\I \cap \P$}{I ∩ P}} \label{sec_I_inter_P}

Minimal inversion sequences for a pattern are, by definition, objects that belong to the combinatorial class $\I \cap \P$. Surprisingly, it appears this class has not yet been enumerated, and we now make a short detour to address this problem.

For all $n \geq 1$ and $k \geq 0$, let $\I\P_{n,k} := \{\sigma \in \I \cap \P \; : \; |\sigma| = n \; \text{ and } \; \max(\sigma) = k\}$. Equivalently, we have $\I\P_{n,k} = \{\sigma \in \I_n \; : \; \{\sigma_i\}_{i \in [1,n]} = [0,k]\}$. For all $n,k \geq 1$, let $\T_{n,k}$ be the set of non-plane increasing trees having $n$ internal nodes labelled by the integers $[0,n-1]$, and $k$ leaves labelled by the integers $[n, n+k-1]$.

As we remarked at the end of Section \ref{sec_colours}, the function $\phi$ of Section \ref{sec_colours} can be restricted to a bijection between $\I_n$ and (non-plane) increasing trees on $n+1$ nodes. Further restricting its domain to $\I \cap \P$ yields another nice correspondence: $\phi$ is also a bijection between $\I\P_{n,k}$ and $\T_{k+1,n-k}$. This is easy to see since $\I\P_{n,k}$ is the subset of inversion sequences of length $n$ whose values are $[0,k]$, and $\T_{k+1,n-k}$ is the subset of increasing trees on $n+1$ nodes whose internal node labels are $[0,k]$.

We now count those trees. Let $\T := \coprod_{n,k \geq 1} \T_{n,k}$, and let $T(x,y) := \sum_{n,k \geq 1} \frac{|\T_{n,k}| x^n y^k}{n! \, k!}$ be their doubly exponential generating function.

\begin{thm} \label{thm_poly_Bernoulli}
    $$T(x,y) = \log \left (\frac{1}{e^x + e^y - e^{x+y}} \right )$$
\end{thm}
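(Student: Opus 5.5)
We use the symbolic method of \cite[Part A]{Flajolet_Sedgewick}, with two kinds of labelled atoms: internal nodes, marked by $x$, and leaves, marked by $y$. In a tree of $\T_{n,k}$ the internal nodes carry the labels $[0,n-1]$ and the leaves carry $[n,n+k-1]$. Increasingness forces every leaf label to exceed every internal label. So, given the relative order among internal labels and among leaf labels separately, the tree is a doubly labelled increasing structure, which is what $T$ counts. The first task is to check that the increasing condition involving leaves is automatic. A leaf's label is always larger than its parent's, because the parent is internal. Hence $\T_{n,k}$ is the same as the set of pairs (increasing labelling of the internal nodes, arbitrary labelling of the leaves). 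This gives a product of exponential structures in two independent label sets, and doubly exponential generating functions apply directly.

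The specification comes from decomposing at the root. If the root is a single node, it is internal only when $n=1$ and $k=0$, which is excluded. So the root is an internal node with a nonempty set of children, each of which is a leaf or a subtree in $\T$. The root has the smallest internal label, so we use a boxed product in the $x$-atoms:
$$\T = \mathcal X^\square \star \textsc{Set}_{\geq 1}(\mathcal Y + \T).$$
This translates to the differential equation
$$\frac{\partial T}{\partial x}(x,y) = e^{y + T(x,y)} - 1, \qquad T(0,y) = 0.$$
The boxed product is only in $x$, and leaves are freely labelled, so the $y$-variable behaves as an ordinary exponential parameter.

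It remains to solve this ODE in $x$ with $y$ fixed. Set $u = e^{-T}$, so that $u' = -T'u = -(e^y u^{-1} - 1)u = u - e^y$, with $u(0)=1$. This is linear, and its solution is $u = e^y + (1-e^y)e^x = e^x + e^y - e^{x+y}$. Hence $T = \log(1/(e^x+e^y-e^{x+y}))$, as claimed.

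The main obstacle is justifying the two-sorted boxed-product specification, in particular that the leaf labels impose no constraint beyond being a block above the internal labels. We handle it with the observation above that a leaf's only order constraint is with its internal parent.
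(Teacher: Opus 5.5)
Your proof is correct and follows the paper's route: the same specification $\T = \mathcal X^\square \star \textsc{Set}_{\geq 1}(\T + \mathcal Y)$ and the same equation $\partial_x T = e^{T+y}-1$, $T(0,y)=0$. The only differences are that you \emph{derive} the closed form via the linearizing substitution $u = e^{-T}$ (giving $u' = u - e^y$, $u(0)=1$) where the paper merely verifies it, and that you spell out why leaf labels impose no increasing constraint, which the paper calls a harmless generalization.

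Two small slips, neither load-bearing. First, leaf labels exceeding internal labels is part of the definition of $\T_{n,k}$, not a consequence of increasingness. Second, a one-node tree is a single leaf, so it has $n=0$ (not $n=1$, $k=0$), and this is why it is excluded.
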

\begin{proof} 
We once again use the symbolic method from \cite[Part A]{Flajolet_Sedgewick}. In order to provide a specification for $\mathcal T$, we consider that objects of a set $\mathcal T_{n,k}$ have two different labellings: one for internal nodes, corresponding to the labels $[0,n-1]$, and one for leaves, corresponding to the labels $[n,n+k-1]$. While the book \cite{Flajolet_Sedgewick} does not explicitly treat the case of objects with two labellings and doubly exponential generating functions, this is a harmless generalization of their approach.
The class $\T$ is described by the recursive specification below, where
\begin{itemize}
    \item $\mathcal X$ is an atomic class consisting of a single labelled object (an internal node),
    \item $\mathcal Y$ is an atomic class consisting of a single labelled object (a leaf),
    \item the boxed product is taken with respect to the labelling of internal nodes only.
\end{itemize}
$$\T = \mathcal X^{\square} \star \textsc{Set}_{\geq 1}(\T +\mathcal Y)$$
This definition corresponds to the following equation for $T(x,y)$:
$$T(x,y) = \int_0^x (e^{T(t,y) + y} - 1) dt.$$
Equivalently, the formal power series $T(x,y)$ is characterized by the two equations below:
$$\frac{\partial T}{\partial x}(x,y) = e^{T(x,y) + y} -1, \quad T(0,y) = 0.$$
It is easy to verify that the function $\log \left (1/(e^x + e^y - e^{x+y}) \right )$ satisfies both equations, concluding the proof.
\end{proof}

\begin{table}[ht]
$$\begin{array}{|c|c|c|c|c|c|c|c|}
\hline
|\T_{n,k}| & k=1 & k=2 & k=3 & k=4 & k=5 & k=6 & k=7 \\
\hline
n=1 & 1 & 1 & 1 & 1 & 1 & 1 & 1\\
\hline
n=2 & 1 & 3 & 7 & 15 & 31 & 63 & 127 \\
\hline
n=3 & 1 & 7 & 31 & 115 & 391 & 1267 & 3991 \\
\hline
n=4 & 1 & 15 & 115 & 675 & 3451 & 16275 & 72955 \\
\hline
n=5 & 1 & 31 & 391 & 3451 & 25231 & 164731 & 999391  \\
\hline
n=6 & 1 & 63 & 1267 & 16275 & 164731 & 1441923 & 11467387 \\
\hline
n=7 & 1 & 127 & 3991 & 72955 & 999391 & 11467387 & 116914351 \\
\hline
\end{array}$$
\caption{The numbers $|\T_{n,k}|$ for $n,k \in [1,7]$.}
\label{table_poly-Bernoulli}
\end{table}

We remark that a consequence of Theorem \ref{thm_poly_Bernoulli} is the symmetry $|\T_{n,k}| = |\T_{k,n}|$, which is not obvious from the definition of our increasing trees or inversion sequences.
Table \ref{table_poly-Bernoulli} shows the numbers $|\T_{n,k}|$ for small values of $n$ and $k$. These numbers appear in \cite{OEIS} as entry A136126. They are called the \emph{poly-Bernoulli numbers\footnote{The name ``poly-Bernoulli numbers" was originally given by Kaneko \cite{Kaneko_1997} to the numbers $|\T_{n+1,k}| + |\T_{n,k+1}|$.} of type C},
and have several known combinatorial interpretations, see e.g. \cite{Bényi_Hajnal_2017} or \cite{Knuth_2024}.

The numbers $|\I_n \cap \P_n|$ are the antidiagonal sums $\sum_{k = 0}^{n-1} |\T_{k+1,n-k}|$. For $n \in [1,7]$, these numbers are $1, 2, 5, 16, 63, 294, 1585$. They appear in \cite{OEIS} as entry A136127, for which no simple formula is known.

\section{Open questions} \label{sec_conclusion}

Our first question concerns the poly-Bernoulli numbers discussed above.
\begin{question}
    Find bijections between the families $\I\P_{n,k}$ or $\T_{n,k}$ of Section \ref{sec_I_inter_P} and other families of combinatorial objects known to be counted by the poly-Bernoulli numbers of type C.
\end{question}

One of the main questions that our work gives rise to is to enumerate, or at least better understand the set of minimal inversion sequences for any pattern. While we solved that question for patterns $\rho$ such that $\rho_1 = \mdd(\rho)$ in Section \ref{sec_colours}, the general case is much more difficult. Indeed, it is not even easy to determine the exact length of the longest $\rho$-minimal inversion sequence for any pattern $\rho$, although Corollary \ref{corol_length} and Proposition \ref{prop_min_max_length} tell us that it always lies between $|\rho| + 2 \, \mdd(\rho) -2$ and $|\rho| + 2 \, \mdd(\rho)$. We have done some preliminary study of this question, and we expect that a complete answer would be difficult to formulate, since it seems to require many different constructions of minimal inversion sequences depending on the pattern considered.
\begin{question}
    Find a way to determine the exact length of the longest $\rho$-minimal inversion sequence for any pattern $\rho$.
\end{question}

One area in which we expect progress to be relatively easy is to identify more classes of patterns that have a given $\ISBT$. In particular, we believe that the approach of Section \ref{sec_colours} could be generalized to patterns whose prefix to the left of the first saturated entry has a simple form. For instance, we expect this to work well with patterns of the form $0^z \cdot \rho$ where $z \geq 0$ and $\rho$ satisfies $\rho_1 = \mdd(\rho)$; a look at Table \ref{table_ISBT} suggests this should give a way of identifying patterns whose $\ISBT$ is $(2,0)$, $(2,3)$, $(6,5,0)$, $(6,21,21)$, or $(24, 54, 38, 0)$, among others.

Two patterns $\rho$ and $\rho'$ are said to be \emph{Wilf-equivalent} for inversion sequences if there is the same number of $\rho$-avoiding as $\rho'$-avoiding sequences in $\I_n$ for all $n \geq 0$. The complete classification of Wilf-equivalent patterns of length 3 in inversion sequences was done in \cite{Corteel_Martinez_Savage_Weselcouch_2016}, and an almost complete classification for patterns of length 4 appears in \cite{Hong_Li_2022}. From this data, we noticed that Wilf-equivalent patterns of length at most $4$ always have the same $\ISBT$.
\begin{question}
    Does Wilf-equivalence between two patterns always imply they have the same inversion sequence basis type?
\end{question}

A natural extension of our work is to study other \emph{minimal} combinatorial objects defined in a similar way for some notion of pattern containment. Some obvious choices of objects to consider are well-known subsets of inversion sequences in which patterns have also been studied, for the same pattern containment relation. We briefly discuss two such cases below.
\begin{itemize}
    \item \textbf{Restricted growth functions.} A \emph{record} (or strict left-to-right maximum) of an integer sequence $\sigma$ is an entry $\sigma_i$ such that $\sigma_i > \sigma_j$ for all $j < i$. 
    Let $$\REC(\sigma) := \{\sigma_i \; : \; \forall j<i, \; \sigma_i > \sigma_j\}$$ be the set of record values of $\sigma$. A \emph{restricted growth function} (see e.g. \cite{Bean_Bell_Ollson_2025}), or RGF for short, can be defined as an inversion sequence $\sigma$ such that $\REC(\sigma) = [0, \max(\sigma)]$. In particular, RGFs are always Cayley permutations.

    Given a pattern $\rho$, it is much easier to determine all $\rho$-minimal RGFs than $\rho$-minimal inversion sequences.
    Let $m := |[0,\max(\rho)] \backslash \REC(\rho)|$ be the number of values of $\rho$ which do not occur as a record. The value $m$ has a role analogous to $\mdd(\rho)$ in the inversion sequence case, since it serves as a measure of how much $\rho$ differs from a RGF. We claim that the $\rho$-minimal RGFs are precisely the sequences obtained by inserting one occurrence of each value of $[0,\max(\rho)] \backslash \REC(\rho)$ in $\rho$, in positions such that the resulting sequence is a RGF (each value must be inserted far enough to the left to be a record, and far enough to the right to avoid overtaking other records).    Indeed,
    \begin{itemize}
        \item it is clearly necessary to insert at least these entries in order to create a RGF which contains $\rho$,
        \item inserting any extra entries (and possibly shifting values in the process) is unnecessary and would create a RGF that is not $\rho$-minimal, since removing those extra entries (and taking the reduction of the resulting sequence) still yields a RGF which contains $\rho$.
    \end{itemize}
    In particular, $\rho$-minimal RGFs always have length $|\rho| + m$, they contain a single occurrence of $\rho$, and they have a much simpler structure than $\rho$-minimal inversion sequences.
    \item \textbf{Ascent sequences.} An \emph{ascent} (or consecutive pattern \underline{01}) of an integer sequence $\sigma$ is a pair of entries $(\sigma_i, \sigma_{i+1})$ such that $\sigma_i < \sigma_{i+1}$.
    Let $$\asc(\sigma) := |\{i \in [1,|\sigma|-1] \; : \; \sigma_i < \sigma_{i+1}\}|$$ be the number of ascents of $\sigma$. An \emph{ascent sequence} \cite{ascent_sequences_2010} is a nonnegative integer sequence $\sigma$ such that $\sigma_i \leq \asc((\sigma_j)_{j < i})$ for all $i$. In particular, ascent sequences are a subset of inversion sequences, and a superset of restricted growth functions.

    Given a pattern $\rho$, the set of $\rho$-minimal ascent sequences appears to be more difficult to study than its inversion sequence counterpart. We now make a few remarks about some of their differences.
    \begin{itemize}
        \item Some patterns that are not ascent sequences only have a single minimal ascent sequence. Consequently, some patterns having different lengths are Wilf-equivalent for ascent sequences. For instance, 010 is the only 10-minimal ascent sequence.
        \item Some patterns have ``gaps" in their ascent sequence basis type. For instance, there exist some 210-minimal ascent sequences of length 11 and 13, but none of length 12.
        \item We believe that a result on the length of $\rho$-minimal ascent sequences analogous to Corollary \ref{corol_length} could be given, albeit with an upper bound that is \emph{quadratic} in the length of $\rho$. More precisely, if $\rho$ is the decreasing pattern of length $k \geq 3$, we can construct a $\rho$-minimal ascent sequence of length $k^2 + 2k - 2$, and we conjecture this is the greatest achievable length.
        Such a sequence can be formed by concatenating $k-1$ factors\footnote{To be precise, the factor containing the values $[a,b]$ is obtained by interleaving the two nonincreasing sequences $(b-1, b-2, \dots, a+1) \cdot (a, a)$ and $(b,b) \cdot (b-1, b-2, \dots, a+1)$.} containing two occurrences of each value in the $k-1$ sets $[0,1],[2,4], [5,8], \dots, [(k^2-k-2)/2, (k^2+k-4)/2]$ (of respective sizes $2,3 \dots, k$), then appending an occurrence of $\rho$ using the values $[(k^2+k-2)/2, (k^2+3k-4)/2]$.
        For instance, with the notation $\texttt A := 10$, $\texttt B := 11$, and $\texttt C := 12$,
        $$0101 \cdot 342423 \cdot 78685756 \cdot \texttt{CBA}9$$ is a $3210$-minimal ascent sequence.
    \end{itemize}
    We hope to study $\rho$-minimal ascent sequences more thoroughly in the future.
\end{itemize}

\section*{Acknowledgements}
We thank Mathilde Bouvel for many insightful comments throughout the preparation of this work, and Emmanuel Jeandel for some remarks which improved its clarity.

\end{document}